    \newcommand{\BC}{{\mathbb {C}}} 
     \newcommand{\BF}{{\mathbb {F}}}
     \newcommand{\BP}{{\mathbb {P}}}
    \newcommand{\BQ}{{\mathbb {Q}}} \newcommand{\BR}{{\mathbb {R}}}
     \newcommand{\BZ}{{\mathbb {Z}}}
     \newcommand{\CH}{{\mathcal {H}}}
     \newcommand{\fL}{{\mathfrak{L}}}
     \newcommand{\an}{{\mathrm{an}}}
    \newcommand{\Gal}{{\mathrm{Gal}}}
    \newcommand{\ord}{{\mathrm{ord}}}
    \renewcommand{\mod}{\ \mathrm{mod}\ }
    \newcommand{\Sel}{{\mathrm{Sel}}}
      \newcommand{\im}{{\mathrm{im}}}
\DeclareFontFamily{U}{wncy}{}
\DeclareFontShape{U}{wncy}{m}{n}{<->wncyr10}{}
\DeclareSymbolFont{mcy}{U}{wncy}{m}{n}
\DeclareMathSymbol{\Sha}{\mathord}{mcy}{"58}
    \theoremstyle{plain}
    \newtheorem{thm}{Theorem}[section] 
    \newtheorem{lem}[thm]{Lemma}  \newtheorem{prop}[thm]{Proposition}
\theoremstyle{remark} \newtheorem{remark}[thm]{Remark}
\theoremstyle{remark} 
\theoremstyle{remark} 
    \numberwithin{equation}{section}
\begin{document}

\title{On the $2$-part of the Birch and Swinnerton-Dyer conjecture for quadratic twists of elliptic curves}

\author{Li Cai, Chao Li, Shuai Zhai}


\thanks{Li Cai was supported by NSFC (Grants No. 11601255 and 11671380). Chao Li was partially supported by the NSF grant DMS-1802269. Shuai Zhai was supported by NSFC (Grant No. 11601272).}

\subjclass[2010]{11G05, 11G40.}

\begin{abstract}
In the present paper, we prove, for a large class of elliptic curves defined over $\BQ$, the existence of an explicit infinite family of quadratic twists with analytic rank $0$. In addition, we establish the $2$-part of the conjecture of Birch and Swinnerton-Dyer for many of these infinite families of quadratic twists. Recently, Xin Wan has used our results to prove for the first time the full Birch--Swinnerton-Dyer conjecture for some explicit infinite families of elliptic curves defined over $\BQ$ without complex multiplication. 
\end{abstract}

\maketitle


\section{Introduction}

Let $E$ be an elliptic curve defined over $\BQ$ with conductor $C$, and complex $L$-series $L(E,s)$. The Birch and Swinnerton-Dyer conjecture asserts that the rank of $E(\BQ)$ is equal to its analytic rank $r_\an:=\ord_{s=1}L(E,s)$. It furthermore predicts that the Tate--Shafarevich group $\Sha(E)$ is always finite, and that
\begin{equation}\label{fbsd}
\frac{L^{(r_\an)}(E,1)}{{r_\an}!\Omega(E) R(E)}=\frac{\prod_\ell c_\ell(E)\cdot |\Sha(E)|}{|E(\BQ)_\mathrm{tor}|^2},  
\end{equation}
where $\Omega(E)$ is the Tamagawa factor at infinity, $R(E)$ is the regulator formed with the N\'{e}ron--Tate pairing, $E(\BQ)_\mathrm{tor}$ is the torsion subgroup of $E(\BQ)$, and the $c_\ell(E)$ are the Tamagawa factors (see \cite{Tate}, for example). In fact, the finiteness of $\Sha(E)$ is only known at present when $r_\an$ is at most $1$, in which case it is also known that $r_\an$ is equal to the rank of $E(\BQ)$ (see \cite{Gross2}, for example). 

If $p$ is any prime number, the equality of the powers of $p$ occurring on the two sides of \eqref{fbsd} is called the $p$-part of the exact Birch--Swinnerton-Dyer formula (but we should remember that the left hand side of \eqref{fbsd} is only known at present to be a rational number when $r_\an$ is at most $1$). We stress that, up until now, the full Birch--Swinnerton-Dyer conjecture had never been proven for infinitely many elliptic curves without complex multiplication. Roughly speaking, our present knowledge of Iwasawa theory shows that for a given $E$, the $p$-part of the Birch--Swinnerton-Dyer conjecture is valid for all sufficiently large primes $p$ when $r_\an \leq 1$. But there are real technical difficulties at present in using Iwasawa theory to prove, in particular, the $2$-part of the Birch--Swinnerton-Dyer conjecture. However, we can apply rather classical results on modular symbols to derive the precise $2$-adic valuation of the algebraic part of the value of the complex $L$-series at $s=1$ in the family of quadratic twists of certain optimal elliptic curves $E$ over $\BQ$ with $r_\an=0$ and $E(\BQ)[2] \cong \BZ/2\BZ$. In particular, for all of these twists, our results show that $r_\an=0$, whence the Mordell--Weil group and the Tate--Shafarevich group of these twists are both finite by the celebrated theorems of Gross--Zagier \cite{Gross1} and Kolyvagin \cite{Kolyvagin}. Moreover, we can prove the $2$-part of exact Birch--Swinnerton-Dyer formula for some of these twists. Happily, Xin Wan has now used some of our results in this paper, combined with deep arguments from Iwasawa theory to prove for the first time the validity of the full Birch--Swinnerton-Dyer conjecture for infinitely many elliptic curves over $\BQ$ without complex multiplication (see \cite[Appendix]{Wan}). He employs deep and complicated arguments from Iwasawa theory to establish the $p$-part of the Birch--Swinnerton-Dyer conjecture for all odd primes $p$ for the elliptic curves in these families. However, it is still not known how to extend these Iwasawa-theoretic arguments to the prime $p=2$,  whereas our elementary arguments work well for $p=2$. For the current progress on the Birch and Swinnerton-Dyer conjecture, one can see the survey article by Coates \cite{Coates3}.

We now denote the left-hand-side of \eqref{fbsd} by $L^{(alg)}(E,1)$. In particular, when $r_\an=0$, 
$$
L^{(alg)}(E,1):=L(E,1)/\Omega_{E},
$$
where $\Omega_{E}$ is equal to $\Omega_{E}^+$ or $2\Omega_{E}^+$, depending on whether $E(\BR)$ is connected, and here $\Omega_{E}^+$ is the least positive real period of a N\'{e}ron differential on a global minimal Weierstrass equation for $E$. For each discriminant $m$ of a quadratic extension of $\BQ$, we write $E^{(m)}$ for the twist of $E$ by this quadratic extension, and write $L(E^{(m)},s)$ for its complex $L$-series. Let $ord_2$ be the order valuation of $\BQ$ at the prime $2$, normalized by $ord_2(2) = 1$, and with $ord_2(0) = \infty$. If $q$ be any prime of good reduction for $E$, let $a_q$ be the trace of Frobenius at $q$ on $E$, so that $N_q=1+q-a_q$ is the number of $\BF_q$-points on the reduction of $E$ modulo $q$. We shall always assume that $E(\BQ)[2] \cong \BZ/2\BZ$, and we write $E':=E/E(\BQ)[2]$ for the $2$-isogenous curve of $E$. For each integer $n > 1$, write $E[n]$ for the Galois module of $n$-division points on $E$. Let $\mathcal{S}$ be the set of primes 
$$
\mathcal{S}=\{q \equiv 1 \mod 4: q \nmid C, ord_2(N_q)=1\}.
$$ 
\begin{thm}\label{MainThm}
Let $E$ be an optimal elliptic curve over $\BQ$ with conductor $C$. Assume that 
\begin{enumerate}
  \item $E$ has odd Manin constant;
  \item $E(\BQ)[2] \cong \BZ/2\BZ$;
  \item $ord_2 (L^{(alg)}(E,1))=-1$.
\end{enumerate}
Let $M=q_1 q_2 \cdots q_r$ be a product of $r$ distinct primes in $\mathcal{S}$. Then $L(E^{(M)},1) \neq 0$, and we have
$$
 ord_2(L^{(alg)}(E^{(M)},1))=r-1.
$$
In particular, $E^{(M)}(\BQ)$ and $\Sha(E^{(M)})$ are both finite.
\end{thm}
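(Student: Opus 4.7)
My plan is to prove the theorem by induction on $r$, the number of prime factors of $M$, with the base case $r=0$ being exactly hypothesis (3). For the inductive step, I aim to establish a ``twisting identity'' of the form
\begin{equation*}
L^{(alg)}(E^{(Mq)},1) \equiv N_q \cdot L^{(alg)}(E^{(M)},1) \pmod{2^{r}}
\end{equation*}
for any prime $q \in \CS$ coprime to $M$. Because $\ord_2(N_q) = 1$ by definition of $\CS$, such an identity, combined with the inductive hypothesis $\ord_2(L^{(alg)}(E^{(M)},1)) = r - 1$, would force $\ord_2(L^{(alg)}(E^{(Mq)},1)) = r$, completing the induction.

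The basic tool is a Birch--Stevens-type formula expressing $L^{(alg)}(E^{(M)},1)$ as a character sum of modular symbols of $f_E$ indexed by cusps $a/M$, twisted by the quadratic Kronecker symbol $\chi_M$. Hypothesis (1), the odd Manin constant, ensures that the modular symbols take $2$-integral values, so the $2$-adic valuation of the $L$-value is controlled by that of the character sum, up to an explicit factor arising from the period ratio $\Omega_{E^{(M)}}^+/\Omega_E^+$ and any Atkin--Lehner sign. Hypothesis (2), $E(\BQ)[2] \cong \BZ/2\BZ$, provides further parity information on the modular symbols, which I would use to pin down the parities of the individual terms in the sum.

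To deduce the twisting identity, I would apply the Hecke eigenvalue relation $T_q f_E = a_q f_E$, which on modular symbols reads
\begin{equation*}
a_q \cdot \phi(a/M) = \phi(qa/M) + \sum_{b=0}^{q-1} \phi((a+bM)/(qM)),
\end{equation*}
and insert it into the character sum for $L^{(alg)}(E^{(Mq)},1)$. Reorganizing via the Chinese Remainder Theorem and isolating the unique ``diagonal'' value of $b$ for which $q \mid (a + bM)$, one obtains a main term proportional to $(a_q - 1 - q) \cdot L^{(alg)}(E^{(M)},1) = -N_q \cdot L^{(alg)}(E^{(M)},1)$, together with an ``off-diagonal'' sum over genuinely new cusps $a/(Mq)$. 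The condition $q\equiv 1 \pmod 4$, which makes $\chi_q$ an even character, is essential so that the character sum pairs correctly with the plus modular symbol and no parity cancellation destroys the main term.

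The principal obstacle is bounding below the $2$-adic valuation of the off-diagonal contribution, i.e.\ showing it lies in $2^{r} \BZ_{(2)}$. This requires exploiting hypothesis (2) to ensure uniform parity of modular symbols at the new cusps, combined with a careful analysis of the orbit structure of the character sum under the involutions $a\mapsto -a$ and $a \mapsto q a^{-1}$. A secondary subtlety is the precise comparison of the real periods $\Omega_{E^{(M)}}$ and $\Omega_{E^{(Mq)}}$ together with the local Tamagawa factor of $E^{(Mq)}$ at $q$; since $q \nmid C$ and $q\equiv 1 \pmod 4$, these must be shown to contribute no spurious $2$-adic factors. Once the induction yields $\ord_2(L^{(alg)}(E^{(M)},1)) = r-1 < \infty$, the nonvanishing $L(E^{(M)},1) \neq 0$ follows immediately, and the finiteness of $E^{(M)}(\BQ)$ and $\Sha(E^{(M)})$ is a consequence of the theorems of Gross--Zagier and Kolyvagin.
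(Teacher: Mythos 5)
Your overall toolkit (Birch--Manin modular symbol formulas, the Hecke relation $\mathbb{T}_qf=a_qf$, the $k\mapsto -k$ symmetry for even characters, the odd Manin constant, and Gross--Zagier--Kolyvagin at the end) matches the paper's, but the inductive engine you propose is precisely the ``obvious'' one-prime-at-a-time argument that the authors point out (Remark 1.2) fails when $E(\BQ)[2]\neq 0$, and your sketch does not close the gap. Two concrete problems. First, the arithmetic of your congruence is off: the main term $N_q\cdot L^{(alg)}(E^{(M)},1)$ has $\mathrm{ord}_2$ exactly $r$ (with $M$ a product of $r$ primes), so a congruence modulo $2^{r}$ says nothing beyond $\mathrm{ord}_2(L^{(alg)}(E^{(Mq)},1))\ge r$; to get the exact valuation (and even the nonvanishing) you would need the off-diagonal term to lie in $2^{r+1}\BZ_{(2)}$, not $2^{r}\BZ_{(2)}$. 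Second, and more seriously, nothing in your plan produces a power of $2$ that grows with $r$. What the Hecke manipulation actually gives is the paper's Lemma 4.1: the \emph{imprimitive} sum $T'_{M,Mq}=\sum_{(c,Mq)=1}\chi_M(c)\langle\{0,\tfrac{c}{Mq}\},f\rangle$ equals $(a_q-2\chi_M(q))\,T_M$ (note the coefficient is $a_q-2\chi_M(q)$, not $-(1+q-a_q)$; both have $\mathrm{ord}_2=1$, but your claimed identity is not what the computation yields). The quantity you need, $T_{Mq}$ with the primitive character $\chi_{Mq}$, differs from this by $2\sum_{\chi_q(c)=-1}\chi_M(c)\langle\{0,\tfrac{c}{Mq}\},f\rangle$, and the symmetries you invoke ($c\mapsto -c$, realness) only give a single bounded factor of $2$, independent of $r$. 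That is exactly where the naive induction breaks down.

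The paper's proof has a genuinely different structure designed to manufacture the needed $2$-power divisibility. Instead of comparing one twist to the previous one, it sums over \emph{all} $2^{r}$ divisors $d\mid M$: since $\sum_{d\mid M}\chi_d(k)$ equals $2^{r}$ when $\chi_{q_i}(k)=1$ for every $i$ and $0$ otherwise, and the surviving $k$'s pair off under $k\mapsto M-k$, one gets the integrality statement $\sum_{d\mid M}T'_{d,M}/\Omega_f^+\in 2^{r}\BZ$ (resp.\ $2^{r+1}\BZ$ for positive discriminant). In this identity the intermediate terms ($1<d<M$) are rewritten via Lemma 4.1 as $\prod_{q\mid M/d}(a_q-2\chi_d(q))\cdot T_d$ and controlled by the inductive hypothesis, while the $d=1$ term $S'_M$ is controlled not by the previous twist but by the separate identity $N_{q_1}\cdots N_{q_r}L(E,1)=\sum_n b_nS'_n$ (Lemma 2.1) together with Lemma 2.2 — so the induction also carries untwisted information about \emph{all} the primes and the original $L$-value, which your scheme does not retain. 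The exact valuation of $T_M$, hence of $L^{(alg)}(E^{(M)},1)$, then drops out by comparing valuations in the integrality identity. Without an analogue of this divisor-sum integrality step (or some other mechanism giving the off-diagonal term valuation at least $r+1$), your proposed induction cannot be completed.
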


\begin{remark}
This theorem generalizes \cite[Theorem 1.2]{Coates2} (where $E=X_0(49)$) and  \cite[Theorem 1.3]{Cai1} (where $E=X_0(36)$) to a much wider class of elliptic curves $E$, with no hypothesis of complex multiplication. It also generalizes \cite{{Kriz2}, {Zhai}}, where only prime twists are considered. For similar results for $E$ without rational $2$-torsion, see \cite{{Kriz1}, {Zhai}}. In the presence of rational $2$-torsion, the methods of \cite{{Kriz2}, {Zhai}} cannot easily treat twists by non-prime quadratic discriminants, because the obvious induction argument fails. We overcome this difficulty by introducing a new integrality argument to make the induction work.
\end{remark}

\begin{remark}
If $\mathcal{S}$ is non-empty, we must have $E(\BQ)[2] \cong E'(\BQ)[2] \cong \BZ/2\BZ$, which is also equivalent to the assertion that $q$ is inert in  both the $2$-division field $\BQ(E[2])$ and $\BQ(E'[2])$ (see \cite[Lemma 4.1]{Kriz2}), where as before $E':=E/E(\BQ)[2]$. Thus, by Chebotarev's density theorem, the set of primes $\mathcal{S}$ has positive density. 
\end{remark}

\begin{remark}
We suppose that the Manin constant of $E$ has to be odd, which will be fully discussed in Section 2. However, we can remove the Manin constant assumption when $4 \nmid C$ by the recent work of \v{C}esnavi\v{c}ius \cite{Cesnavicius}. Moreover, the conjecture that the Manin constant is always $\pm 1$ has been proved by Cremona for all optimal elliptic curves of conductor less than $390000$ (see \cite{Cremona2}). 
\end{remark}

Our second main result is a proof of the $2$-part of the Birch and Swinnerton-Dyer conjecture for many of the twists in Theorem \ref{MainThm}. As before, let $E':=E/E(\BQ)[2]$ be the $2$-isogenous curve of $E$.

\begin{thm}\label{MainThm-BSD}
Let $E$ and $M$ be as in Theorem \ref{MainThm}. Assume further that
\begin{enumerate}
  \item $\Sha(E')[2]=0$; 
  \item all primes $\ell$ which divide $2C$ split in $\BQ(\sqrt M)$; 
  \item the $2$-part of the Birch and Swinnerton-Dyer conjecture holds for $E$.
\end{enumerate}
Then the $2$-primary component of $\Sha(E^{(M)})$ is zero, and the $2$-part of the Birch and Swinnerton-Dyer conjecture holds for $E^{(M)}$.
\end{thm}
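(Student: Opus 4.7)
The plan is to compare the Birch--Swinnerton-Dyer quantities for $E^{(M)}$ with those of $E$ term by term, using Theorem \ref{MainThm} on the analytic side and a $2$-descent via the isogeny $\phi: E^{(M)} \to E'^{(M)}$ on the algebraic side. Since $L(E^{(M)},1)\neq 0$ by Theorem \ref{MainThm}, the theorems of Gross--Zagier and Kolyvagin imply that $E^{(M)}(\BQ)$ and $\Sha(E^{(M)})$ are finite, so the $2$-part of BSD for $E^{(M)}$ reduces to the single identity
$$
\ord_2 |\Sha(E^{(M)})| \;=\; \ord_2 L^{(alg)}(E^{(M)},1) + 2\,\ord_2|E^{(M)}(\BQ)_\tor| - \sum_\ell \ord_2 c_\ell(E^{(M)}),
$$
in which the $L$-term equals $r-1$ by Theorem \ref{MainThm}.

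For the Tamagawa and torsion contributions, hypothesis (2) forces $E^{(M)}\cong E$ over $\BQ_\ell$ for every $\ell\mid 2C$, so $c_\ell(E^{(M)})=c_\ell(E)$ at these places and the period $\Omega_{E^{(M)}}$ is related to $\Omega_E$ by the standard (and controlled) factor. At each new prime $q\mid M$, the defining conditions of $\mathcal{S}$ ($q\equiv 1\pmod 4$, $\ord_2 N_q=1$, together with the fact recorded in Remark 1.3 that $q$ is inert in both $\BQ(E[2])$ and $\BQ(E'[2])$) pin down the Kodaira type, and hence $\ord_2 c_q(E^{(M)})$, at the additive reduction of $E^{(M)}$; twisting also preserves the $\BZ/2\BZ$ summand of the rational $2$-torsion. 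Combined with hypothesis (3) applied to $E$, which fixes $\ord_2|\Sha(E)|$, this determines every ingredient of the displayed identity except for $|\Sha(E^{(M)})|$ itself.

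The algebraic side is controlled by $\phi$-descent on $E^{(M)}$ and $\hat\phi$-descent on $E'^{(M)}$, using the familiar exact sequences
$$
0\to E'^{(M)}(\BQ)/\phi E^{(M)}(\BQ)\to S^{(\phi)}(E^{(M)}/\BQ)\to \Sha(E^{(M)})[\phi]\to 0
$$
and its dual. Hypothesis (1) $\Sha(E')[2]=0$ applied to the analogous sequences for $E'$ provides the base case, and hypothesis (2) keeps the local Selmer conditions at the primes of $2C$ unchanged under the twist, so only the $r$ new primes $q\mid M$ can enlarge the Selmer groups. Their inertia in the two $2$-division fields lets one count the new local $\BF_2$-dimensions exactly. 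The Cassels--Tate pairing then combines the $\phi$- and $\hat\phi$-Selmer information to recover $|\Sha(E^{(M)})[2^\infty]|$.

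The hard part will be the local accounting at the primes $q\mid M$: one must verify that the combined $\BF_2$-dimension growth of $S^{(\phi)}(E^{(M)}/\BQ)$ and $S^{(\hat\phi)}(E'^{(M)}/\BQ)$ coming from the $r$ new primes is exactly absorbed by the Mordell--Weil quotients and the Tamagawa jumps, leaving $\ord_2|\Sha(E^{(M)})|$ at precisely the value forced by Theorem \ref{MainThm}. Hypothesis (1) is essential at this step because it kills the $\hat\phi$-component of $\Sha(E'^{(M)})$ after twisting and so closes the gap between the Selmer upper bound and the actual $|\Sha(E^{(M)})[2]|$ predicted by BSD.
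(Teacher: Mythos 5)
Your overall architecture (finiteness from Gross--Zagier and Kolyvagin, Tamagawa/torsion bookkeeping, a descent through the $2$-isogeny $\phi$, Cassels--Tate at the end) is the same as the paper's, but the step you defer as ``the hard part'' is exactly where the proof lives, and your anticipated mechanism for it is not the one that works. In the paper (Proposition \ref{prop:bsd2}) there is \emph{no} Selmer growth at the primes $q\mid M$ to be ``absorbed'': because each $q\in\mathcal{S}$ satisfies $q\equiv 1 \mod 4$ and is inert in both $\BQ(E[2])$ and $\BQ(E'[2])$, the local condition $\mathcal{L}_q$ defining the $\phi$-Selmer group is literally unchanged under twisting (Klagsbrun, Lemma 6.8), and at $v\mid 2C$ and $v=\infty$ it is unchanged by hypothesis (2) and $M>0$; hence $\Sel_\phi(E^{(M)})\cong\Sel_\phi(E)\cong\BZ/2\BZ$. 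You then need a second input that your plan never mentions: the global product formula $\lvert\Sel_\phi(E)\rvert/\lvert\Sel_{\hat\phi}(E')\rvert=\prod_v \lvert\mathcal{L}_v(E)\rvert/2$ (Klagsbrun, Theorem 6.4), which, since all local conditions coincide, forces $\Sel_{\hat\phi}(E'^{(M)})\cong\BZ/2\BZ$ as well. Only with both of these does the exact sequence give $\dim_{\BF_2}\Sel_2(E^{(M)})\le 2$, after which the rational $2$-torsion point and the square-order property of $\Sha(E^{(M)})[2]$ yield $\Sha(E^{(M)})[2]=0$. Without these two inputs you have no upper bound on $\Sel_2(E^{(M)})$ at all, and the phrasing ``leaving $\ord_2\lvert\Sha(E^{(M)})\rvert$ at precisely the value forced by Theorem \ref{MainThm}'' flirts with circularity: the BSD formula is the thing being proved, so $\Sha(E^{(M)})[2]$ must be computed independently (it vanishes), not read off from the $L$-value.

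Two further points are glossed over. First, hypothesis (3) has to be unpacked before the descent can start: the $2$-part of BSD for $E$, together with $\ord_2(L^{(alg)}(E,1))=-1$ and $E(\BQ)[2]\cong\BZ/2\BZ$, gives $\Sha(E)[2]=0$, $\Sel_2(E)\cong\BZ/2\BZ$ and $\ord_2(\prod_\ell c_\ell(E))=1$; these, plus $\Sha(E')[2]=0$, are the actual hypotheses fed into Proposition \ref{prop:bsd2}, and from the displayed exact sequence they also pin down $\Sel_\phi(E)\cong\Sel_{\hat\phi}(E')\cong\BZ/2\BZ$, which is the base point of the whole comparison. Second, the Tamagawa accounting at $q\mid M$ is not done via Kodaira types but via $\Phi_{\mathcal{E}^{(M)}}(\BF_q)[2]\cong E^{(M)}(\BQ_q)[2]\cong E(\BF_q)[2]\cong\BZ/2\BZ$ (additive reduction at an odd prime), while at $\ell\nmid M$ one compares component groups under the unramified twist and at the distinguished prime $\ell_0$ one uses the splitting hypothesis; this gives $\ord_2(\prod_\ell c_\ell(E^{(M)}))=r+1$, which, combined with $\Sha(E^{(M)})[2]=0$ and the torsion term, matches the analytic valuation $r-1$ from Theorem \ref{MainThm}.
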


\begin{remark}
In view of our assumption that $\#(E(\BQ)[2])=2$, the $2$-part of the Birch and Swinnerton-Dyer conjecture for $E$ would show that our hypothesis that $ord_2(L^{(alg)}(E,1))=-1$ implies  $\Sha(E)[2]=0$, but it is still not known how to prove this at present. However, if we assume that the $2$-part of the Birch and Swinnerton-Dyer conjecture holds for $E$, as well as the hypotheses of Theorem \ref{MainThm}, we will have $\Sha(E)[2]=0$. Moreover, if we assume two more conditions on $\Sha(E')[2]$ and $\ell$, then we can compare the local conditions of the Selmer groups of $E$ and $E^{(M)}$, and get the triviality of $\Sha(E^{(M)})[2]$.
\end{remark}

\begin{remark}
The $2$-part of Birch--Swinnerton-Dyer conjecture for a single elliptic curve (of small conductor) can be verified by numerical calculation when $r_\an=0$. Theorem \ref{MainThm-BSD} then allows one to deduce the $2$-part of Birch--Swinnerton-Dyer conjecture for many of its quadratic twists (of arbitrarily large conductor).
\end{remark}

\begin{remark}
We emphasize that the theorem applies to elliptic curves with various different reduction types at 2, such as $X_0(14)$ with non-split multiplicative reduction at $2$, `$34A1$' with split multiplicative reduction at $2$, and `$99C1$' with good ordinary reduction at $2$ (we use Cremona's label for each curve). We emphasize that it also applies to elliptic curves with potentially supersingular reduction at $2$, such as $X_0(36)$ and `$56B1$'. We will give a detailed descriptions of quadratic twists of $X_0(14)$ and some numerical examples in Section 6. Of course, the theorem could apply more families of elliptic curves, such as quadratic twists of `$46A1$', $X_0(49)$, `$66A1$',  `$66C1$' and so on.
\end{remark}

Recently, a remarkable preprint of Smith \cite{Smith} uses some arithmetic properties of elliptic curves at the prime $2$ to establish some deep results conjectured by Goldfeld (in particular, that the set of all square free congruent numbers congruent to $1,2,3$ modulo $8$ has natural density zero).  However, Smith's analytic arguments at present seem only valid for elliptic curves with full rational $2$-torsion. We should mention that the non-vanishing result presented in this paper could give a much weaker result in the direction of Goldfeld's conjecture for the family of elliptic curves in Theorem \ref{MainThm}. We also remark that it would be possible to prove analogous results to those established here for rank one quadratic twists of elliptic curves, by combining the Heegner points arguments (see \cite{Coates2}) and the explicit Gross--Zagier formula (see \cite{Cai2}).

\smallskip

\emph{Acknowledgments.} We would like to thank John Coates for very helpful discussions, advices and comments; Jack Thorne for his useful comments and suggestions; and ianya Liu and Ye Tian for their encouragement. We thank the referee for the useful comments. The second-named author (CL) and the third-named author (SZ) would also like to thank X. Wan and the Morningside Center of Mathematics for the hospitality during their visits.

\bigskip

\section{Modular symbols}

Modular symbols were first used by Birch, and a little later by Manin \cite{Manin}. They subsequently became the basic tool in Cremona's construction of his remarkable tables of elliptic curves and their arithmetic invariants \cite{Cremona1}. We shall show in this paper that they are also very useful in studying the 2-part of the conjecture of Birch and Swinnerton-Dyer. We first recall some basic results of modular symbols, for more details, one can see \cite{Zhai}, but we shall give these results as well for reading convenience.

For each integer $C \geq 1$, let $S_2(\Gamma_0(C))$ be the space of cusp forms of weight 2 for $\Gamma_0(C)$. In what follows, $f$ will always denote a normalized primitive eigenform in $S_2(\Gamma_0(C))$, all of whose Fourier coefficients belong to $\BQ$. Thus, $f$ will correspond to an isogeny class of elliptic curves defined over $\BQ$, and we will denote by $E$ the unique {\it optimal} elliptic curve in the $\BQ$-isogeny class of $E$. The complex $L$-series $L(E,s)$ will then coincide with the complex $L$-series attached to the modular form $f$. Moreover, there will be a non-constant rational map defined over $\BQ$
$$
\varphi: X_0(C) \rightarrow E,
$$
which does not factor through any other elliptic curve in the isogeny class of $E$. Let $\omega$ denote a N\'{e}ron differential on a global minimal Weierstrass equation for $E$.
Then, writing $\varphi^*(\omega)$ for the pull back of $\omega$ by $\varphi$, there exists $\nu_E \in \BQ^\times$ such that
\begin{equation}\label{mc}
\nu_Ef(\tau)d\tau = \varphi^*(\omega).
\end{equation}
The rational number $\nu_E$ is called the {\it Manin constant}. It is well known to lie in $\BZ$, and it is conjectured to always be equal to 1. Moreover, it is known to be odd whenever the conductor $C$ of $E$ is odd. Let $\CH$ be the upper half plane, and put $\CH^*=\CH\cup\BP^1(\BQ)$. Let $g$ be any element of  $\Gamma_0(C)$. Let $\alpha, \beta$ be two points in $\CH^*$ such that $\beta=g\alpha$. Then any path from $\alpha$ to $\beta$ on $\CH^*$ is a closed path on $X_0(C)$ whose homology class only depends on $\alpha$ and $\beta$. Hence it determines an integral homology class in $H_1(X_0(C),\BZ)$, and we denote this homology class by the \emph{modular symbol} $\{\alpha,\beta\}$. We can then form the modular symbol
$$
\langle \{\alpha,\beta\}, f \rangle := \int_{\alpha}^{\beta} 2\pi i f(z) dz.
$$
The period lattice $\Lambda_f$ of the modular form $f$ is defined to be the set of these modular symbols for all such pairs  $\{\alpha,\beta\}$. It is a discrete subgroup of $\BC$
of rank 2. If $\fL_E$ denotes the period lattice of a N\'{e}ron differential $\omega$ on $E$, it follows from \eqref{mc}
that
\begin{equation}\label{mc2}
\fL_E = \nu_E\Lambda_f.
\end{equation}
Define $\Omega_E^+$ (respectively, $i \Omega_E^-$) to be the least positive real (respectively, purely imaginary) period of a N\'{e}ron differential of a global minimal equation for $E$, and $\Omega_f^+$ (respectively, $i \Omega_f^-$) to be the least positive real (respectively, purely imaginary) period of $f$. Thus, by \eqref{mc2}, we have
\begin{equation}\label{mc3}
\Omega_E^+= \nu_E\Omega_f^+, \ \ \ \Omega_E^-= \nu_E\Omega_f^-.
\end{equation}
In this section, we will carry out all of our computations with the period lattice $\Lambda_f$, but whenever we subsequently translate them into assertions about the conjecture of Birch and Swinnerton-Dyer for the elliptic curve $E$, we must switch to the period lattice $\fL_E$ by making use of \eqref{mc2}.

\medskip

More generally, if $\alpha, \beta$ are any two elements of $\CH^*$, and $g$ is any element of $S_2(\Gamma_0(C))$, we put $\langle \{\alpha,\beta\}, g \rangle := \int_{\alpha}^{\beta} 2\pi i g(z) dz$.  This linear functional defines an element of $H_1(X_0(C), \BR)$, which we also denote by $\{\alpha, \beta\}.$ 

Let $m$ be a positive integer satisfying $(m,C)=1$. Let $a_m$ be the Fourier coefficient of the modular form $f$ attached to $E$. According to Birch, Manin \cite[Theorem 4.2]{Manin} and Cremona \cite[Chapter 3]{Cremona1}, we have the following formulae:
\begin{equation}\label{ms1}
(\sum_{l \mid m}l-a_m) L(E,1)=-\sum_{\substack{l \mid m \\ k \mod l}} \langle \{0,\frac{k}{l}\}, f \rangle;
\end{equation}
here $l$ runs over all positive divisors of $m$; and
\begin{equation}\label{ms2}
L(E,\chi,1)=\frac{g(\bar{\chi})}{m} \sum_{k \mod m}{\chi}(k) \langle \{0,\frac{k}{m}\}, f \rangle;
\end{equation}
here $\chi$ is any primitive Dirichlet character modulo $m$, and $g(\bar{\chi})= \sum_{k \mod m} \bar{\chi}(k) e^{2\pi i \frac{k}{m}}$.

For each odd square-free positive integer $m$, we define $r(m)$ to be the number of prime factors of $m$. Also, in what follows, we shall always only consider the positive divisors of $m$, and define $\chi_m$ to be the primitive quadratic character modulo $m$. Define
$$
S_m:=\sum_{k=1}^{m} \langle \{0,\frac{k}{m}\}, f \rangle, \, \, \, S'_m:=\sum_{\substack{k=1 \\ (k,m)=1}}^{m} \langle \{0,\frac{k}{m}\}, f \rangle, \, \, \, T_m:=\sum_{k=1}^{m} \chi_m(k) \langle \{0,\frac{k}{m}\}, f \rangle.
$$
Recall that (see \cite[Lemma 2.2]{Zhai}), for each odd square-free positive integer $m>1$, we have
\begin{equation}\label{ms_sum_Sl}
\sum_{l \mid m} S_l = \sum_{d=1}^{r(m)} 2^{r(m)-d} \sum_{\substack{n \mid m \\ r(n)=d}} S'_n.
\end{equation}
We repeatedly use the above identity to prove the following lemma.

\begin{lem}\label{Nq_L(E,1)}
Let $E$ be the optimal elliptic curve over $\BQ$ attached to $f$. Let $m$ be any integer of the form $m=q_1 q_2 \cdots q_{r(m)}$, with $(m,C)=1$, $r(m) \geq 1$, and $q_1, \ldots, q_{r(m)}$ arbitrary distinct odd primes. Then we have
$$
N_{q_1} N_{q_2} \cdots N_{q_{r(m)}} L(E,1) = \sum_{d=1}^{r(m)} \sum_{\substack{n \mid m \\ r(n)=d}} b_n S'_n,
$$
where $b_n = (-1)^{r(m)} \prod_{q \mid \frac{m}{n}} (1 - q)$, here $q$ runs over the prime factors of $\frac{m}{n}$.
\end{lem}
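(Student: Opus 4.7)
The plan is to expand the product on the left, use formula \eqref{ms1} to convert each $a_n L(E,1)$ into a sum of modular symbols, and then translate from the $S_l$'s to the $S'_n$'s via \eqref{ms_sum_Sl}. The whole argument reduces to recognizing three classical Dirichlet convolutions on squarefree integers.

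\textbf{Step 1.} Expand $\prod_i N_{q_i} = \prod_i (1 + q_i - a_{q_i})$ by distributing. Grouping summands by the divisor $n \mid m$ corresponding to the indices on which we pick the factor $-a_{q_i}$, and using the multiplicativity of $\sigma_1$ and of $a_\bullet$ on squarefree integers prime to $C$, we get
\[
\prod_i N_{q_i} = \sum_{n \mid m} (-1)^{r(n)} \sigma_1(m/n)\, a_n.
\]
Multiply by $L(E,1)$ and apply \eqref{ms1} (rearranged as $a_n L(E,1) = \sigma_1(n) L(E,1) + \sum_{l \mid n} S_l$) to each $n \mid m$; this is legitimate since $(n,C) = 1$, and for $n=1$ it forces $S_1 = 0$. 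The $\sigma_1(n)\sigma_1(m/n) = \sigma_1(m)$ contributions collect into $\sigma_1(m) L(E,1) \sum_{n\mid m}(-1)^{r(n)} = 0$, since $r(m)\geq 1$. In the remaining double sum, swapping the order of summation and writing $n = l t$ with $t \mid m/l$, the coefficient of each $S_l$ collapses via the identity $\mu \ast \sigma_1 = N$ to $(-1)^{r(l)}(m/l)$, yielding
\[
\prod_i N_{q_i}\, L(E,1) = \sum_{l \mid m,\, l > 1}(-1)^{r(l)}(m/l)\, S_l.
\]

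\textbf{Step 2.} Convert $S_l$ into $S'_k$'s by Möbius inversion applied to \eqref{ms_sum_Sl}. Setting $F(l) := \sum_{j \mid l} S_j$, we have $S_l = \sum_{j \mid l} \mu(l/j) F(j)$; substituting $F(j) = \sum_{k \mid j,\, k > 1} 2^{r(j)-r(k)} S'_k$ and swapping sums, the coefficient of $S'_k$ is $(\mu \ast 2^{r(\cdot)})(l/k) = \prod_{q \mid l/k}(2-1) = 1$, where $2^{r(\cdot)}$ denotes the multiplicative function $n \mapsto 2^{r(n)}$. Hence $S_l = \sum_{k \mid l,\, k > 1} S'_k$.

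\textbf{Step 3.} Plug the identity from Step 2 into the formula at the end of Step 1 and swap sums once more. The coefficient of $S'_k$ becomes
\[
\sum_{l:\, k \mid l \mid m}(-1)^{r(l)}(m/l) = (-1)^{r(k)}\sum_{t \mid m/k}(-1)^{r(t)}(m/k)/t = (-1)^{r(k)} \phi(m/k),
\]
using $\mu \ast N = \phi$. For squarefree $m/k$ we have $\phi(m/k) = \prod_{q \mid m/k}(q-1)$, and since $r(m/k) = r(m) - r(k)$, this rewrites as $(-1)^{r(m)}\prod_{q \mid m/k}(1-q)$, which is exactly the claimed coefficient $b_k$. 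The main obstacle is sign bookkeeping rather than any deep idea: once one identifies the three inner sums as the classical convolutions $\mu \ast \sigma_1 = N$, $\mu \ast N = \phi$, and $\mu \ast 2^{r(\cdot)} = 1$ on squarefrees, the proof runs automatically.
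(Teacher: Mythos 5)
Your argument is correct, and it reaches the lemma by a genuinely different route from the paper. The paper proceeds by induction on $r(m)$: it writes down an explicit inclusion--exclusion expansion of $N_{q_1}\cdots N_{q_{r(m)}}$, invokes the inductive hypothesis together with \eqref{ms1} and \eqref{ms_sum_Sl}, and then matches the coefficients $b_{q_1}, b_{q_1q_2},\dots$ one at a time (using $-2^{r(m)-1}=-\prod_{i\ge 2}\bigl((1-q_i)+(1+q_i)\bigr)$ to collapse the resulting sums). You instead give a direct, closed-form computation: expand $\prod_i(1+q_i-a_{q_i})$ over divisors, apply \eqref{ms1} termwise (noting $S_1=0$), and evaluate the three resulting coefficient sums by the convolution identities $\mu*\sigma_1=N$, $\mu*2^{r(\cdot)}=\mathbf{1}$ and $\mu*N=\phi$ on squarefree arguments; your Step 2 in effect recovers from \eqref{ms_sum_Sl} the simpler identity $S_l=\sum_{k\mid l}S'_k$ (which is the tautological regrouping of $k/l$ by reduced denominator, and is what \eqref{ms_sum_Sl} encodes). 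All the sign and coprimality checks you need are in place: $a_n$ and $\sigma_1$ are multiplicative on the squarefree $n\mid m$ with $(n,C)=1$, $\sum_{n\mid m}(-1)^{r(n)}=0$ because $r(m)\ge 1$, and $(-1)^{r(k)}\phi(m/k)=(-1)^{r(m)}\prod_{q\mid m/k}(1-q)=b_k$ since $r(k)+r(m/k)=r(m)$; the special case $r(m)=2$ reproduces the paper's formula $(1-q_2)S'_{q_1}+(1-q_1)S'_{q_2}+S'_{q_1q_2}$. What the two approaches buy: yours is non-inductive, makes the coefficient $b_n$ appear transparently as $(-1)^{r(n)}\phi(m/n)$, and avoids the paper's coefficient bookkeeping; the paper's induction is more pedestrian but stylistically parallels the inductions used later (Lemma \ref{ord2_S'm} and Theorems \ref{MainThm-1}, \ref{MainThm-2}), which is presumably why the authors chose it.
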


\begin{proof}
We give the proof of the lemma by induction on $r(m)$, the number of prime factors of $m$. The assertion is true for $r(m)=1$ by \eqref{ms1}. Assume next that $r(m)=2$. Note that
\begin{align*}
N_{q_1} N_{q_2} &= - ((1+q_1)(1+q_2) - (1+q_1-N_{q_1})(1+q_2-N_{q_2})) + (1+q_2)N_{q_1}+(1+q_1)N_{q_2} \\
                &= - ((1+q_1)(1+q_2) - a_{q_1}a_{q_2}) + (1+q_2)N_{q_1}+(1+q_1)N_{q_2},
\end{align*}
and in view of \eqref{ms1} and \eqref{ms_sum_Sl}, we then have that
\begin{align*}
N_{q_1} N_{q_2} L(E,1) &= \sum_{l \mid q_1 q_2} S_l - ((1+q_2)S_{q_1}+(1+q_1)S_{q_2}) \\
                       &= (1-q_2)S_{q_1}+(1-q_1)S_{q_2} + S'_{q_1 q_2},
\end{align*}
as required. Now assume  $r(m) > 2$, and that the lemma is true for all divisors $n>1$ of $m$ with $n \neq m$. We then consider the case $m=q_1 q_2 \cdots q_{r(m)}$. First note that
\begin{align*}
N_{q_1} N_{q_2} \cdots N_{q_{r(m)}} =& (-1)^{r(m)-1}((1+q_1)(1+q_2) \cdots (1+q_{r(m)}) - a_{q_1}a_{q_2} \cdots a_{q_{r(m)}}) \\
                                     & + (-1)^{r(m)-2}\sum_{i=1}^{r(m)}N_{q_i} \prod_{\substack{k=1 \\ k \neq i}}^{r(m)}(1+q_k) + (-1)^{r(m)-3}\sum_{i,j=1}^{r(m)}N_{q_i}N_{q_j} \prod_{\substack{k=1 \\ k \neq i,j}}^{r(m)}(1+q_k) \\
                                     & + \cdots + (-1)\sum_{i,j=1}^{r(m)}(1+q_i)(1+q_j) \prod_{\substack{k=1 \\ k \neq i,j}}^{r(m)}N_{q_k}
                                     + \sum_{i=1}^{r(m)}(1+q_i) \prod_{\substack{k=1 \\ k \neq i}}^{r(m)}N_{q_k}.
\end{align*}
Without loss of generality, here we can just consider the coefficients of $S'_{q_1}$, $S'_{q_1 q_2}$, $\ldots$, $S'_{q_1 q_2 \cdots q_{r(m)}}$ in the identity of the lemma, i.e. $b_{q_1}$, $b_{q_1 q_2}$, $\ldots$, $b_{q_1 q_2 \cdots q_{r(m)}}$. By our assumption, and again in view of \eqref{ms1} and \eqref{ms_sum_Sl}, we conclude that
\begin{align*}
b_{q_1} =& -(-1)^{r(m)-1}2^{r(m)-1} + (-1)^{r(m)-1}\prod_{i=2}^{r(m)}(1+q_i)
          + (-1)^{r(m)-1}\sum_{i=2}^{r(m)}(1-q_i) \prod_{\substack{k=2 \\ k \neq i}}^{r(m)}(1+q_k) \\
          &+ (-1)^{r(m)-1}\sum_{\substack{i,j=2 \\ i \neq j}}^{r(m)}(1-q_i)(1-q_j)\prod_{\substack{k=2 \\ k \neq i,j}}^{r(m)}(1+q_k) + \cdots + (-1)^{r(m)-1}\sum_{i=2}^{r(m)}(1+q_i) \prod_{\substack{k=2 \\ k \neq i}}^{r(m)}(1-q_k).
\end{align*}
Note that
$$
-2^{r(m)-1} = - \prod_{i=2}^{r(m)}((1-q_i) + (1+q_i)),
$$
hence we have
$$
b_{q_1} = (-1)^{r(m)} \prod_{i=2}^{r(m)}(1-q_i).
$$
Similar arguments hold for $b_{q_1 q_2}, \ldots, b_{q_1 q_2 \cdots q_{r(m)-1}}$, and it is easy to see that
$$
b_{q_1 q_2 \cdots q_{r(m)}} = (-1)^{r(m)}.
$$
The proof of the lemma is complete.
\end{proof}

\begin{lem}\label{ord2_S'm}
Let $E$ be the optimal elliptic curve over $\BQ$ with analytic rank zero attached to $f$. Let $m$ be any integer of the form $m=q_1 q_2 \cdots q_{r(m)}$, with $(m,C)=1$, $r(m) \geq 1$, and $q_1, \ldots, q_{r(m)}$ arbitrary distinct odd primes congruent to $1$ modulo $4$. If $ord_2 (N_{q_i}) = 1$ holds for any $1 \leq i \leq r(m)$, then we have
$$
ord_2 (S'_m / \Omega_f^+) = ord_2 (N_{q_1} N_{q_2} \cdots N_{q_{r(m)}} L(E,1) / \Omega_f^+).
$$
\end{lem}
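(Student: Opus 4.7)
The plan is to prove the lemma by induction on $r(m)$, using Lemma \ref{Nq_L(E,1)} as the sole linear relation and exploiting the congruence $q_i \equiv 1 \pmod 4$ to bound the $2$-adic valuations of the coefficients $b_n$ attached to proper divisors.

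First I would rewrite the identity of Lemma \ref{Nq_L(E,1)} by isolating the top term. Since $b_m = (-1)^{r(m)}$ (empty product), this yields
\begin{equation*}
(-1)^{r(m)} S'_m \;=\; N_{q_1} N_{q_2} \cdots N_{q_{r(m)}} L(E,1) \;-\; \sum_{\substack{n \mid m \\ 1 \le r(n) < r(m)}} b_n S'_n.
\end{equation*}
For each proper divisor $n$ of $m$ with $r(n) \ge 1$, the coefficient $b_n = (-1)^{r(m)} \prod_{q \mid m/n}(1-q)$ has $2$-adic valuation at least $2(r(m)-r(n))$, because each prime factor $q$ of $m/n$ satisfies $q \equiv 1 \pmod 4$ and thus $ord_2(1-q) \ge 2$. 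This is the only place where the hypothesis $q_i \equiv 1 \pmod 4$ enters.

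Set $v_0 := ord_2(L(E,1)/\Omega_f^+)$, which is a finite integer since $E$ has analytic rank zero and $L(E,1)/\Omega_f^+ \in \BQ$. By the hypothesis $ord_2(N_{q_i}) = 1$ we have
\begin{equation*}
ord_2\!\left(N_{q_1} \cdots N_{q_{r(m)}} L(E,1)/\Omega_f^+\right) \;=\; r(m) + v_0.
\end{equation*}
I would then induct on $r(m)$. The base case $r(m)=1$ reduces to $S'_{q_1} = -N_{q_1} L(E,1)$ by Lemma \ref{Nq_L(E,1)}, and the claim is immediate. For the inductive step, assume the lemma for every divisor $n\mid m$ with $1 \le r(n) < r(m)$, so that $ord_2(S'_n/\Omega_f^+) = r(n) + v_0$. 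Combining with the bound on $ord_2(b_n)$,
\begin{equation*}
ord_2\!\left(b_n S'_n/\Omega_f^+\right) \;\ge\; 2(r(m)-r(n)) + r(n) + v_0 \;=\; 2r(m) - r(n) + v_0 \;>\; r(m) + v_0,
\end{equation*}
since $r(n) < r(m)$. Every term on the right of the isolated identity thus has $2$-adic valuation strictly greater than $r(m)+v_0$, so the non-archimedean triangle inequality forces
\begin{equation*}
ord_2(S'_m / \Omega_f^+) \;=\; r(m) + v_0 \;=\; ord_2\!\left(N_{q_1} \cdots N_{q_{r(m)}} L(E,1)/\Omega_f^+\right),
\end{equation*}
closing the induction.

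The proof is essentially bookkeeping once Lemma \ref{Nq_L(E,1)} is in hand, and I do not anticipate any serious obstacle. The only genuine content is the observation that the hypothesis $q_i \equiv 1 \pmod 4$ produces the $2$-adic slack $ord_2(1-q_i) \ge 2$, which is precisely what is needed for the $S'_m$ contribution to dominate over all lower-order $S'_n$ contributions in the identity of Lemma \ref{Nq_L(E,1)}.
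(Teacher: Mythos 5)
Your proof is correct and follows essentially the same route as the paper: induction on $r(m)$, isolating $S'_m$ in the identity of Lemma \ref{Nq_L(E,1)}, and using $ord_2(1-q)\ge 2$ for $q\equiv 1\pmod 4$ together with $ord_2(N_{q_i})=1$ to show every proper-divisor term has strictly larger $2$-adic valuation than $r(m)+ord_2(L(E,1)/\Omega_f^+)$. The paper leaves this dominance estimate as "not difficult to see," whereas you spell out the bookkeeping explicitly; the content is identical.
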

\begin{proof}
We give the proof of the lemma by induction on $r(m)$. The assertion is obviously true for $r(m)=1$ according to \eqref{ms1}. When $r(m)=2$, say $m=q_1 q_2$, by Lemma \ref{Nq_L(E,1)}, we have that
$$
N_{q_1} N_{q_2} L(E,1) =  (1 - q_2) S'_{q_1} + (1 - q_1) S'_{q_2} + S'_{q_1 q_2}.
$$
The assertion for $r(m)=2$ then follows by noting that $q_i \equiv 1 \mod 4$ and the induction assumption. Now assume  $r(m) > 2$, and that the lemma is true for all divisors $n>1$ of $m$ with $n \neq m$. We then consider the case $m=q_1 q_2 \cdots q_{r(m)}$. According to Lemma \ref{Nq_L(E,1)}, we have that
$$
N_{q_1} N_{q_2} \cdots N_{q_{r(m)}} L(E,1) = \sum_{d=1}^{r(m)-1} \sum_{\substack{n \mid m \\ r(n)=d}} (-1)^{r(m)} \prod_{q \mid \frac{m}{n}} (1 - q) S'_n + (-1)^{r(m)} S'_m.
$$
By our assumption, it is not difficult to see that
$$
ord_2(\prod_{q \mid \frac{m}{n}} (1 - q) S'_n / \Omega_f^+) > ord_2(N_{q_1} N_{q_2} \cdots N_{q_{r(m)}} L(E,1)/ \Omega_f^+)
$$
holds for all divisors $n>1$ of $m$ with $n \neq m$. Then the assertion for $m=q_1 q_2 \cdots q_{r(m)}$ follows immediately. This completes the proof of the lemma.
\end{proof}

\bigskip

\section{Integrality at 2}

Let $E$ be the optimal elliptic curve defined over $\BQ$ with discriminant $\Delta_E$ and conductor $C$, which is attached to our modular form $f$. In this section, we will prove some results of integrality at $2$, and apply them to get the non-vanishing results for some certain quadratic twists of elliptic curves, provided $L(E,1) \neq 0$. Recall that 
$$
\Omega_E^+ = \nu_E\Omega_f^+,
$$
we then have
$$
ord_2(L(E,1)/\Omega_E^+) = ord_2(L(E,1)/\Omega_f^+) - ord_2(\nu_E) = ord_2(L(E,1)/\Omega_f^+),
$$
under our assumption on the Manin constant.

When the complex $L$-series of $E$ does not vanish at $s=1$, for every prime number $p$, the strong Birch--Swinnerton-Dyer conjecture predicts the following exact formula
$$
ord_p (L^{(alg)}(E,1)) = ord_p (\#(\Sha(E))) + ord_p (\prod_{\ell |C} c_{\ell}(E)) - 2ord_p(\#(E(\BQ))),
$$
We begin by establishing some preliminary results, which will be needed for the proof of the desired results. Throughout this section, we will always assume $m \equiv 1 \mod 4$. Since the form of the period lattice of a N\'{e}ron differential on $E$ is different, according to the sign of the discriminant of $E$, we first consider the case when the discriminant of $E$ is negative.

Recall that when the discriminant of $E$ is negative, then $E(\BR)$ has only one real component, and so the period lattice $\fL$ of a N\'{e}ron differential on $E$ has a $\BZ$-basis of the form
$$
\left[\Omega_E^+, \frac{\Omega_E^+ + i \Omega_E^-}{2}\right],
$$
where $\Omega_E^+$ and $\Omega_E^-$ are both real, and the period lattice $\Lambda_f$ of $f$ has a $\BZ$-basis of the form
$$
\left[\Omega_f^+, \frac{\Omega_f^+ + i \Omega_f^-}{2}\right],
$$
where $\Omega_f^+$ and $\Omega_f^-$ are also both real. We can then write
\begin{equation}\label{k/mf-1}
\langle \{0,\frac{k}{m}\}, f \rangle = (s_{k,m} \Omega_f^+ + i t_{k,m} \Omega_f^-)/2
\end{equation}
for any integer $m$ coprime to $C$, where $s_{k,m}, t_{k,m}$ are integers of the same parity. Moreover, by the basic property of modular symbols, $\langle \{0,\frac{k}{m}\}, f \rangle$ and $\langle \{0,\frac{m-k}{m}\}, f \rangle$ are complex conjugate periods of $f$. Thus we obtain
\begin{equation}\label{S'm}
S'_m/\Omega_f^+=\sum_{\substack{k=1 \\ (k,m)=1}}^{(m-1)/2} s_{k,m}.
\end{equation}
Similarly, when $m \equiv 1 \mod 4$, we have
\begin{equation}\label{Tm}
T_m/\Omega_f^+=\sum_{\substack{k=1 \\ (k,m)=1}}^{(m-1)/2} \chi_{m}(k) s_{k,m}.
\end{equation}
Moreover, in this case, by \eqref{S'm}, we always have that
$$
ord_2(N_q L(E,1) / {\Omega_f^+}) \geq 0,
$$
for any prime $q$ with $(q,C)=1$. We define
$$
T'_{d,m} = \sum_{k \in (\BZ/m\BZ)^{\times}} \chi_d(k) \langle \{0,\frac{k}{m}\}, f \rangle,
$$
then we have the following theorem of integrality at $2$.

\begin{thm}\label{Integrality-1}
Let $E$ be an optimal elliptic curve over $\BQ$ with $\Delta_E<0$. Let $m$ be any integer of the form $m=q_1 q_2 \cdots q_{r(m)}$, with $(m,C)=1$, $r(m) \geq 1$, and $q_1, \ldots, q_{r(m)}$ arbitrary distinct odd primes in $\mathcal{S}$. Then 
$$
\sum_{d \mid m} T'_{d,m}/{\Omega_f^+} = 2^{r(m)} \Psi_m,
$$
where $\Psi_m$ is an integer.
\end{thm}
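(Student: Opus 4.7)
The plan is to expand the modular symbols on the right-hand side using the explicit formula (\ref{k/mf-1}), exploit the functional equation $\langle \{0, k/m\}, f\rangle$ and $\langle \{0, (m-k)/m\}, f\rangle$ being complex conjugates to kill imaginary parts, and then recognize the inner character sum as a simple indicator function.

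First I would substitute (\ref{k/mf-1}) into the definition of $T'_{d,m}$. Since every $q_i \equiv 1 \pmod 4$, every divisor $d$ of $m$ also satisfies $d \equiv 1 \pmod 4$, so the quadratic character $\chi_d$ is even, i.e.\ $\chi_d(m-k) = \chi_d(-k) = \chi_d(k)$. Pairing $k$ with $m-k$, the imaginary parts $(t_{k,m} - t_{m-k,m}) = 2t_{k,m}$ of the modular symbols cancel, and one obtains
\begin{equation*}
T'_{d,m}/\Omega_f^+ \;=\; \sum_{\substack{1 \leq k \leq (m-1)/2 \\ (k,m)=1}} \chi_d(k)\, s_{k,m},
\end{equation*}
with every $s_{k,m} \in \BZ$ by the structure of the period lattice when $\Delta_E < 0$.

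Next I would interchange the order of summation in $\sum_{d \mid m} T'_{d,m}/\Omega_f^+$ and analyze the inner sum $\sum_{d \mid m} \chi_d(k)$. Since $m = q_1 \cdots q_{r(m)}$ is squarefree, its divisors correspond to subsets of $\{q_1, \ldots, q_{r(m)}\}$, and by multiplicativity of the Kronecker symbol
\begin{equation*}
\sum_{d \mid m} \chi_d(k) \;=\; \prod_{i=1}^{r(m)} \bigl(1 + \chi_{q_i}(k)\bigr).
\end{equation*}
Each factor equals $2$ if $k$ is a quadratic residue mod $q_i$ and $0$ otherwise, so the product equals $2^{r(m)}$ when $k$ is a QR modulo every $q_i$ and vanishes otherwise. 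Combining:
\begin{equation*}
\sum_{d \mid m} T'_{d,m}/\Omega_f^+ \;=\; 2^{r(m)} \!\!\sum_{\substack{1 \leq k \leq (m-1)/2 \\ (k,m)=1 \\ \chi_{q_i}(k)=1\, \forall i}}\! s_{k,m},
\end{equation*}
which exhibits the claimed identity with $\Psi_m$ equal to this last sum of integers $s_{k,m}$.

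I do not expect any serious obstacle here; the content is essentially formal once (\ref{k/mf-1}) is in hand. The only delicate point is to verify that the hypothesis $q_i \equiv 1 \pmod 4$ is used precisely to ensure $\chi_d(-1) = 1$ for every $d \mid m$, which is what permits the imaginary periods to drop out uniformly across all $d$. Should one attempt the analogous result for $\Delta_E > 0$ (the next subsection of the paper, presumably), the shape of the period lattice $\Lambda_f$ changes and one must instead extract both real and $\Omega_f^-/2$ contributions, but for $\Delta_E < 0$ the argument above is self-contained.
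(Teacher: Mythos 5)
Your proof is correct and follows essentially the same route as the paper's: collapse $\sum_{d\mid m}\chi_d(k)=\prod_i(1+\chi_{q_i}(k))$ into the indicator of $k$ being a square modulo every $q_i$, and use $q_i\equiv 1\pmod 4$ to pair $k$ with $m-k$ so that only the real parts $s_{k,m}\Omega_f^+$ survive, yielding $\Psi_m=\mathop{{\sum}^*}_{k\le (m-1)/2}s_{k,m}\in\BZ$. The only (immaterial) difference is the order of the two steps: you extract real parts within each $T'_{d,m}$ before summing over $d$, while the paper sums over $d$ first and then pairs $k$ with $m-k$.
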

\begin{proof}
It is easy to see that
\begin{align*}
\sum_{d \mid m} T'_{d,m} &= \sum_{k \in (\BZ/m\BZ)^{\times}} \sum_{d \mid m} \chi_d(k) \langle \{0,\frac{k}{m}\}, f \rangle  \\
&= 2^{r(m)} \mathop{{\sum}^*}_{k \in (\BZ/m\BZ)^{\times}} \langle \{0,\frac{k}{m}\}, f \rangle,
\end{align*}
where $\mathop{{\sum}^*}$ means that $k$ runs over all the elements in $(\BZ/m\BZ)^{\times}$ such that $\chi_{q_i}(k)=1$ for all $1 \leq i \leq r(m)$. Since $q_i \equiv 1 \mod 4$, if $k$ is of an element in the above summation, so is $m-k$. Then by \eqref{k/mf-1}, we have that 
$$
\mathop{{\sum}^*}_{k \in (\BZ/m\BZ)^{\times}} \langle \{0,\frac{k}{m}\}, f \rangle = \mathop{{\sum}^*}_{\substack{k=1 \\ (k,m)=1}}^{(m-1)/2} s_{k,m} \Omega_f^+.
$$
Then the argument follows immediately if we define 
$$
\Psi_m= \mathop{{\sum}^*}_{\substack{k=1 \\ (k,m)=1}}^{(m-1)/2} s_{k,m},
$$
which is an integer.
\end{proof}

When the discriminant of $E$ is positive, then $E(\BR)$ has two real components, and so the period lattice $\fL$ of a N\'{e}ron differential on $E$ has a $\BZ$-basis of the form
$$
[\Omega_E^+, i \Omega_E^-],
$$
with $\Omega_E^+$ and $\Omega_E^-$ real numbers, and the period lattice $\Lambda_f$ of $f$ has a $\BZ$-basis of the form
$$
[\Omega_f^+, i \Omega_f^-],
$$
with $\Omega_f^+$ and $\Omega_f^-$ real numbers too. We can then write
\begin{equation}\label{k/mf-2}
\langle \{0,\frac{k}{m}\}, f \rangle = s_{k,m} \Omega_f^+ + i t_{k,m} \Omega_f^-
\end{equation}
for any integer $m$ coprime to $C$, where $s_{k,m}, t_{k,m}$ are integers. Similarly, we can obtain
\begin{equation}\label{S'm'}
S'_m/\Omega_f^+=2\sum_{\substack{k=1 \\ (k,m)=1}}^{(m-1)/2} s_{k,m},
\end{equation}
and when $m \equiv 1 \mod 4$, we have
\begin{equation}\label{Tm'}
T_m/\Omega_f^+=2\sum_{\substack{k=1 \\ (k,m)=1}}^{(m-1)/2} \chi_{m}(k) s_{k,m}.
\end{equation}
Moreover, in this case, by \eqref{S'm'}, we always have that
$$
ord_2(N_q L(E,1) / {\Omega_f^+}) \geq 1,
$$
for any prime $q$ with $(q,C)=1$. We then have the following parallel theorem of integrality at $2$.

\begin{thm}\label{Integrality-2}
Let $E$ be an optimal elliptic curve over $\BQ$ with $\Delta_E>0$. Let $m$ be any integer of the form $m=q_1 q_2 \cdots q_{r(m)}$, with $(m,C)=1$, $r(m) \geq 1$, and $q_1, \ldots, q_{r(m)}$ arbitrary distinct odd primes in $\mathcal{S}$.  Then 
$$
\sum_{d \mid m} T'_{d,m}/{\Omega_f^+} = 2^{r(m)+1} \Psi_m,
$$
where $\Psi_m$ is an integer.
\end{thm}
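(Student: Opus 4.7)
The plan is to mirror the proof of Theorem \ref{Integrality-1} almost verbatim, substituting the positive-discriminant expansion \eqref{k/mf-2} for \eqref{k/mf-1} and tracking the extra factor of $2$ that arises because the $\BZ$-basis of $\Lambda_f$ now has $i\Omega_f^-$ in place of $(\Omega_f^+ + i\Omega_f^-)/2$. First I would interchange the order of summation:
$$
\sum_{d \mid m} T'_{d,m} = \sum_{k \in (\BZ/m\BZ)^{\times}} \left(\sum_{d \mid m} \chi_d(k)\right) \langle \{0,\tfrac{k}{m}\}, f \rangle.
$$
Since $m = q_1 \cdots q_{r(m)}$ is square-free, the inner sum factors as $\prod_{i=1}^{r(m)} (1 + \chi_{q_i}(k))$, which equals $2^{r(m)}$ when $\chi_{q_i}(k) = 1$ for every $i$ and vanishes otherwise. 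Denoting by $\mathop{{\sum}^*}$ the restriction to such $k$, we obtain
$$
\sum_{d \mid m} T'_{d,m} = 2^{r(m)} \mathop{{\sum}^*}_{k \in (\BZ/m\BZ)^{\times}} \langle \{0,\tfrac{k}{m}\}, f \rangle.
$$

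Next I would pair $k$ with $m-k$. Because every $q_i \equiv 1 \bmod 4$ is in $\mathcal{S}$, each $\chi_{q_i}$ is even, so the involution $k \mapsto m-k$ preserves the starred subset. By \eqref{k/mf-2}, $\langle \{0,\tfrac{k}{m}\}, f \rangle = s_{k,m}\Omega_f^+ + it_{k,m}\Omega_f^-$ with $s_{k,m}, t_{k,m} \in \BZ$; and since $\langle \{0,\tfrac{k}{m}\}, f \rangle$ and $\langle \{0,\tfrac{m-k}{m}\}, f \rangle$ are complex conjugate periods, we have $s_{k,m} = s_{m-k,m}$ and $t_{k,m} = -t_{m-k,m}$. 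Thus the two conjugate contributions add to $2 s_{k,m}\Omega_f^+$. Here is the one place the argument diverges from Theorem \ref{Integrality-1}: in the negative-discriminant setting the pairing of conjugates produced $s_{k,m}\Omega_f^+$ because of the denominator $2$ in \eqref{k/mf-1}, whereas here no such denominator is present, which is precisely what upgrades the power of $2$ from $r(m)$ to $r(m)+1$.

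Combining these, I get
$$
\mathop{{\sum}^*}_{k \in (\BZ/m\BZ)^{\times}} \langle \{0,\tfrac{k}{m}\}, f \rangle = 2\,\Omega_f^+ \mathop{{\sum}^*}_{\substack{k=1 \\ (k,m)=1}}^{(m-1)/2} s_{k,m},
$$
and defining $\Psi_m := \mathop{{\sum}^*}_{k=1,(k,m)=1}^{(m-1)/2} s_{k,m} \in \BZ$ yields $\sum_{d \mid m} T'_{d,m}/\Omega_f^+ = 2^{r(m)+1}\Psi_m$, as required. I do not expect any real obstacle here; the only point worth double-checking is that $k \mapsto m-k$ truly preserves the starred condition, which reduces to $\chi_{q_i}(-1) = 1$, i.e. $q_i \equiv 1 \bmod 4$—a hypothesis baked into the definition of $\mathcal{S}$.
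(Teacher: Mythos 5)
Your proof is correct and follows essentially the same route as the paper: interchange the sums to isolate the residues $k$ with $\chi_{q_i}(k)=1$ for all $i$, pair $k$ with $m-k$ using $q_i\equiv 1 \bmod 4$, and invoke \eqref{k/mf-2} (no denominator $2$ in the positive-discriminant lattice) to extract the extra factor of $2$, giving $2^{r(m)+1}\Psi_m$. No issues to report.
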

\begin{proof}
The proof of the above theorem is similar to Theorem \ref{Integrality-1}. As usual, we have
\begin{align*}
\sum_{d \mid m} T'_{d,m} &= \sum_{k \in (\BZ/m\BZ)^{\times}} \sum_{d \mid m} \chi_d(k) \langle \{0,\frac{k}{m}\}, f \rangle  \\
&= 2^{r(m)} \mathop{{\sum}^*}_{k \in (\BZ/m\BZ)^{\times}} \langle \{0,\frac{k}{m}\}, f \rangle,
\end{align*}
where $\mathop{{\sum}^*}$ means that $k$ runs over all the elements in $(\BZ/m\BZ)^{\times}$ such that $\chi_{q_i}(k)=1$ for all $1 \leq i \leq r(m)$. Since $q_i \equiv 1 \mod 4$, if $k$ is of an element in the above summation, so is $m-k$. But when the discriminant is positive, by \eqref{k/mf-2}, we have 
$$\mathop{{\sum}^*}_{k \in (\BZ/m\BZ)^{\times}} \langle \{0,\frac{k}{m}\}, f \rangle = 2 \mathop{{\sum}^*}_{\substack{k=1 \\ (k,m)=1}}^{(m-1)/2} s_{k,m} \Omega_f^+.
$$
Then the argument follows immediately if we define 
$$
\Psi_m= \mathop{{\sum}^*}_{\substack{k=1 \\ (k,m)=1}}^{(m-1)/2} s_{k,m},
$$
which is an integer.
\end{proof}

\bigskip

\section{Non-vanishing results}

The aim of this section is to apply the results of integrality at $2$ in the previous section to obtain the corresponding non-vanishing results of quadratic twists of elliptic curves. Specifically, we prove the precise $2$-adic valuation of the algebraic central value of these $L$-functions attached to some certain families of quadratic twists of elliptic curves. Moreover, one can use these non-vanishing theorems to verify the $2$-part of the Birch and Swinnerton-Dyer conjecture. Throughout this section, we will always assume $m>0$ and $m \equiv 1 \mod 4$.

Before proving our non-vanishing results, we will first prove the following lemma, in which the action of Hecke operator on modular symbols is involved. For each prime $p$ not dividing the conductor $C$, the Hecke operator $\mathbb{T}_p$ acts on modular symbols $\{\alpha, \beta\}$ via 
$$
\mathbb{T}_p (\{\alpha, \beta\}) = \{p\alpha, p\beta\} + \sum_{k \mod p} \{\frac{\alpha+k}{p}, \frac{\beta+k}{p}\}.
$$
In particular, we have 
$$
 \langle \mathbb{T}_p (\{\alpha, \beta\}), f \rangle = \langle \{\alpha, \beta\}, \mathbb{T}_p f \rangle = a_p \langle \{\alpha, \beta\}, f \rangle,
$$
since $\mathbb{T}_p f= a_p f$.

\begin{lem}\label{T'_{d,m}}
Let $m$ be any integer of the form $m=q_1 q_2 \cdots q_{r(m)}$, with $(m,C)=1$, $r(m) \geq 2$, and $q_1, \ldots, q_{r(m)}$ arbitrary distinct odd primes. Let $d>1$ be a positive integer dividing $m$ and $q$ be a prime dividing $\frac{m}{d}$, then we have
$$
T'_{d,m}  = (a_q - 2 \chi_d(q))T'_{d,\frac{m}{q}} .
$$
\end{lem}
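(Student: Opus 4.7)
The plan is to apply the Hecke operator $\mathbb{T}_q$ to the modular symbols $\{0, k/m'\}$ with $m' = m/q$ and $k \in (\BZ/m'\BZ)^\times$, and then exploit the eigenvalue relation $\langle \mathbb{T}_q \xi, f \rangle = a_q \langle \xi, f\rangle$. Using the explicit formula
$$\mathbb{T}_q \{0, k/m'\} = \{0, qk/m'\} + \sum_{j=0}^{q-1} \{j/q, (k+jm')/m\},$$
and additivity of the pairing with $f$ in the modular-symbol slot, I would get
$$a_q \langle \{0, k/m'\}, f\rangle = \langle \{0, qk/m'\}, f\rangle + \sum_{j=0}^{q-1}\langle \{0, (k+jm')/m\}, f\rangle - \sum_{j=0}^{q-1}\langle \{0, j/q\}, f\rangle.$$
I would then multiply by $\chi_d(k)$ and sum over $k \in (\BZ/m'\BZ)^\times$; the left side becomes $a_q T'_{d,m'}$, and the claim would follow by identifying the three terms on the right.

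For the first term, since $\gcd(q, m') = 1$, multiplication by $q$ permutes $(\BZ/m'\BZ)^\times$. Because $f$ is a cusp form with trivial constant Fourier coefficient, translating the endpoint of the integral by an integer does not change the value, so $\langle \{0, qk/m'\}, f\rangle = \langle \{0, (qk \bmod m')/m'\}, f\rangle$. Changing variables and using $\chi_d(q)^{-1} = \chi_d(q)$ produces $\chi_d(q) T'_{d,m'}$.

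For the second term, I would set $\ell = k + jm'$; as $k$ and $j$ vary, $\ell$ runs over all residues modulo $m$ with $\gcd(\ell, m') = 1$, and since $d \mid m'$ one has $\chi_d(k) = \chi_d(\ell)$. Splitting according to whether $\gcd(\ell, q) = 1$ or $q \mid \ell$ (writing $\ell = q\ell'$ in the latter case with $\gcd(\ell', m') = 1$, so $\ell/m = \ell'/m'$ and $\chi_d(\ell) = \chi_d(q)\chi_d(\ell')$) yields exactly $T'_{d,m} + \chi_d(q) T'_{d,m'}$. For the third term, since $d > 1$ and $d \mid m'$, the character $\chi_d$ is nontrivial on $(\BZ/m'\BZ)^\times$, so the inner sum $\sum_k \chi_d(k)$ vanishes. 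Assembling the three contributions gives
$$a_q T'_{d,m'} = \chi_d(q) T'_{d,m'} + T'_{d,m} + \chi_d(q) T'_{d,m'},$$
and rearranging yields $T'_{d,m} = (a_q - 2\chi_d(q)) T'_{d,m'}$.

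The most delicate step, and the one most prone to bookkeeping errors, will be the re-indexing of the middle term: correctly seeing that the pairs $(k,j)$ biject with residues $\ell$ modulo $m$ coprime to $m'$, then cleanly separating those coprime to $q$ (contributing $T'_{d,m}$) from those divisible by $q$ (contributing $\chi_d(q) T'_{d,m'}$). The other point requiring care is the justification that integer translations of a cuspidal endpoint do not alter the pairing with $f$, which underlies the identification used in the first term.
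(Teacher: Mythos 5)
Your proof is correct and takes essentially the same approach as the paper's: you apply $\mathbb{T}_q$ to the symbols $\{0,k/(m/q)\}$, pair with $f$ via the eigenvalue relation, reindex by the Chinese remainder theorem, split off the numerators divisible by $q$, and kill the $\{0,j/q\}$ contribution using $\sum_k \chi_d(k)=0$ --- exactly the ingredients of the paper's argument. The only differences are organizational (you expand $a_q T'_{d,m/q}$ while the paper first decomposes $T'_{d,m}$ via CRT and then substitutes the Hecke relation), and your explicit justification that $\langle\{0,\alpha+1\},f\rangle=\langle\{0,\alpha\},f\rangle$ is a point the paper leaves implicit.
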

\begin{proof}
Recall that 
$$
T'_{d,m} = \sum_{k \in (\BZ/m\BZ)^{\times}} \chi_d(k) \langle \{0,\frac{k}{m}\}, f \rangle.
$$
By the Chinese remainder theorem, we have 
$$
(\BZ/m\BZ)^{\times} \cong (\BZ/q\BZ)^{\times} \times (\BZ/\frac{m}{q}\BZ)^{\times}.
$$
So we can write
\begin{equation}\label{T'_{d,m}-2}
T'_{d,m} = \sum_{k' \in (\BZ/\frac{m}{q}\BZ)^{\times}} \chi_d(k') \sum_{k \in \BZ/q\BZ} \langle \{0,\frac{\frac{m}{q}k+k'}{m}\}, f \rangle - \sum_{k' \in (\BZ/\frac{m}{q}\BZ)^{\times}} \chi_d(k'q) \langle \{0,\frac{k'q}{m}\}, f \rangle.
\end{equation}
Let the Hecke operator $\mathbb{T}_q$ act on the modular symbol $\{0,\frac{k'}{m/q}\}$, we get that
$$
\mathbb{T}_q (\{0,\frac{k'}{m/q}\}) =  \{0, \frac{k'}{m}\} + \sum_{k \mod q} \{0,\frac{\frac{k'}{m/q}+k}{q}\} - \sum_{k \mod q} \{0, \frac{k}{q}\}.
$$
Hence,
$$
\sum_{k \in \BZ/q\BZ} \langle \{0,\frac{\frac{m}{q}k+k'}{m}\}, f \rangle = a_q \langle \{0,\frac{k'}{m/q}\}, f \rangle + \sum_{k \in \BZ/q\BZ} \langle \{0,\frac{k}{q}\}, f \rangle - \langle \{0,\frac{k'}{m}\}, f \rangle.
$$
Then the first term of the right-hand side of \eqref{T'_{d,m}-2} becomes
$$
\sum_{k' \in (\BZ/\frac{m}{q}\BZ)^{\times}} \chi_d(k')  (a_q \langle \{0,\frac{k'}{m/q}\}, f \rangle + \sum_{k \in \BZ/q\BZ} \langle \{0,\frac{k}{q}\}, f \rangle - \langle \{0,\frac{k'}{m}\}, f \rangle),
$$
which is equal to 
\begin{equation}\label{eq4.1-1}
\sum_{k \in (\BZ/\frac{m}{q}\BZ)^{\times}} \chi_d(k)  (a_q \langle \{0,\frac{k}{m/q}\}, f \rangle - \langle \{0,\frac{k}{m}\}, f \rangle),
\end{equation}
since 
$$
\sum_{k' \in (\BZ/\frac{m}{q}\BZ)^{\times}} \chi_d(k') = 0.
$$
Then \eqref{eq4.1-1} becomes 
$$
a_q \sum_{k \in (\BZ/\frac{m}{q}\BZ)^{\times}} \chi_d(k) \langle \{0,\frac{k}{m/q}\}, f \rangle - \sum_{k' \in (\BZ/\frac{m}{q}\BZ)^{\times}} \chi_d(k'q) \langle \{0,\frac{k'q}{m}\}, f \rangle
$$
if we substitute $k=k'q$ in the second term. We then have 
$$
T'_{d,m} = a_q \sum_{k \in (\BZ/\frac{m}{q}\BZ)^{\times}} \chi_d(k) \langle \{0,\frac{k}{m/q}\}, f \rangle - 2 \chi_d(q) \sum_{k' \in (\BZ/\frac{m}{q}\BZ)^{\times}} \chi_d(k') \langle \{0,\frac{k'}{m/q}\}, f \rangle.
$$
This completes the proof of the lemma by noting that
$$
T'_{d,\frac{m}{q}} = \sum_{k \in (\BZ/\frac{m}{q}\BZ)^{\times}} \chi_d(k) \langle \{0,\frac{k}{m/q}\}, f \rangle.
$$
\end{proof}

Now we are ready to prove Theorem \ref{MainThm}. When the discriminant of $E$ is negative, we have the following result.
\begin{thm}\label{MainThm-1}
Let $E$ be an optimal elliptic curve over $\BQ$ with conductor $C$, and with odd Manin constant. Assume that $E$ has negative discriminant, and satisfies $E(\BQ)[2] \neq 0$ and $ord_2(L(E,1)/{\Omega_f^+}) = -1$. Let $m$ be any integer of the form $m=q_1 q_2 \cdots q_{r(m)}$, with $r(m) \geq 1$ and $q_1, \ldots, q_{r(m)}$ arbitrary distinct odd primes congruent to $1$ modulo $4$, and with $(m,C)=1$. If $ord_2(N_{q_i})=1$ for $1 \leq i \leq r(m)$, then $L(E^{(m)},1) \neq 0$, and we have
$$
ord_2(L(E^{(m)},1)/{\Omega_{E^{(m)}}^+})=r(m)-1.   
$$
\end{thm}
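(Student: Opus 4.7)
The plan is to proceed by induction on $r(m)$, proving the intermediate claim $ord_2(T_m/\Omega_f^+) = r(m) - 1$ and only at the end converting this to a statement about $\Omega_{E^{(m)}}^+$. Since $m\equiv 1\pmod 4$, the Gauss sum is $g(\chi_m)=\sqrt m$, so \eqref{ms2} gives $L(E^{(m)},1) = T_m/\sqrt m$ with $T_m = T'_{m,m}$. Granting the standard fact that $\sqrt m\,\Omega_{E^{(m)}}^+/\Omega_f^+\in\BZ_{(2)}^\times$ (a consequence of the odd Manin constant hypothesis, $(m,2C)=1$, and squarefreeness of $m$), the theorem will follow from this intermediate claim.

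For the inductive step I would evaluate each term in the integrality identity of Theorem~\ref{Integrality-1},
$$\sum_{d\mid m}T'_{d,m}/\Omega_f^+ = 2^{r(m)}\Psi_m,\qquad \Psi_m\in\BZ.$$
For $d=1$: $T'_{1,m}=S'_m$, and Lemma~\ref{ord2_S'm} together with the hypotheses $ord_2(L(E,1)/\Omega_f^+)=-1$ and $ord_2(N_{q_i})=1$ gives $ord_2(S'_m/\Omega_f^+)=r(m)-1$. For $1<d<m$: iterating Lemma~\ref{T'_{d,m}} over the primes $q\mid m/d$ yields
$$T'_{d,m} = \prod_{q\mid m/d}(a_q-2\chi_d(q))\cdot T_d.$$
The condition $q\in\mathcal{S}$ forces $a_q\equiv 0\pmod 4$ (since $q\equiv 1\pmod 4$ gives $1+q\equiv 2\pmod 4$ and $ord_2(N_q)=1$ forces $N_q\equiv 2\pmod 4$, hence $a_q=1+q-N_q\equiv 0\pmod 4$), so each factor $a_q-2\chi_d(q)$ has $2$-adic valuation exactly $1$. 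Combined with the inductive hypothesis $ord_2(T_d/\Omega_f^+)=r(d)-1$ (applicable since every prime factor of $d$ lies in $\mathcal{S}$ and $r(d)<r(m)$), this gives $ord_2(T'_{d,m}/\Omega_f^+) = (r(m)-r(d))+(r(d)-1) = r(m)-1$.

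The key parity step is that $m$ has exactly $2^{r(m)}-1$ proper divisors, an odd number; since each corresponding term has $2$-adic valuation exactly $r(m)-1$, their sum also has $2$-adic valuation exactly $r(m)-1$ (writing each as $2^{r(m)-1}$ times a $2$-adic unit and adding an odd number of such units). Rearranging the integrality identity gives
$$T_m/\Omega_f^+ = 2^{r(m)}\Psi_m - \sum_{\substack{d\mid m\\ d<m}}T'_{d,m}/\Omega_f^+,$$
a difference of a term of $ord_2\ge r(m)$ and a term of $ord_2$ exactly $r(m)-1$, so $ord_2(T_m/\Omega_f^+)=r(m)-1$ and in particular $L(E^{(m)},1)\ne 0$. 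The argument is uniform for all $r(m)\ge 1$: when $r(m)=1$ the sum over $1<d<m$ is empty, so the base case is subsumed.

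The main obstacle I anticipate is the delicate bookkeeping of $2$-adic valuations that must click together exactly: the $4$-divisibility $a_q\equiv 0\pmod 4$ (from the very definition of $\mathcal{S}$), the odd parity $2^{r(m)}-1\equiv 1\pmod 2$, and the inductive control of $T_d/\Omega_f^+$ must all cooperate to pin down the valuation of $T_m/\Omega_f^+$ \emph{exactly}, since a mere lower bound would not suffice---if any one of these factors produced an extra power of $2$ at the wrong place, the leading dyadic digits of the sum could cancel and the argument would collapse. A secondary technical input is the $2$-adic integrality of $\sqrt m\,\Omega_{E^{(m)}}^+/\Omega_f^+$, to be verified separately via a standard comparison of N\'eron differentials for quadratic twists by odd squarefree integers coprime to the conductor.
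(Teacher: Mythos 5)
Your proposal is correct and follows essentially the same route as the paper: the same induction using the integrality identity of Theorem \ref{Integrality-1}, the Hecke-operator relation of Lemma \ref{T'_{d,m}}, and Lemma \ref{ord2_S'm}, with the observation $a_q\equiv 0 \pmod 4$ for $q\in\mathcal{S}$ justifying $ord_2(a_q-2\chi_d(q))=1$. The only cosmetic differences are that you group all $2^{r(m)}-1$ proper-divisor terms (an odd count, each of exact valuation $r(m)-1$) where the paper separates $S'_m$ from the even count of intermediate divisors, you subsume the base case $r(m)=1$ into the uniform argument rather than quoting \cite[Theorem 1.5]{Zhai}, and you state explicitly the Gauss-sum and period comparison that the paper leaves implicit when passing from $T_m/\Omega_f^+$ to $L(E^{(m)},1)/\Omega_{E^{(m)}}^+$.
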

\begin{proof}
We will prove the theorem by induction on $r(m)$, of course we have got the argument when $r(m)=1$ in \cite[Theorem 1.5]{Zhai}. We first note that
\begin{align*}
\sum_{d \mid m} T'_{d,m} &= \sum_{d \mid m} \sum_{k \in (\BZ/m\BZ)^{\times}} \chi_d(k) \langle \{0,\frac{k}{m}\}, f \rangle \\
&= S'_m + \sum_{\substack{d \mid m \\ 1 < d< m}} \sum_{k \in (\BZ/m\BZ)^{\times}} \chi_d(k) \langle \{0,\frac{k}{m}\}, f \rangle + T_m.
\end{align*}
By Lemma \ref{T'_{d,m}}, it is easy to see that
$$
T'_{d,m}  = \prod_{q \mid \frac{m}{d}}(a_q - 2 \chi_d(q)) \cdot T'_{d,d} = \prod_{q \mid \frac{m}{d}}(a_q - 2 \chi_d(q)) \cdot T_d.
$$
Hence,
$$
\sum_{\substack{d \mid m \\ 1 < d< m}} \sum_{k \in (\BZ/m\BZ)^{\times}} \chi_d(k) \langle \{0,\frac{k}{m}\}, f \rangle = \sum_{\substack{d \mid m \\ 1 < d< m}} \prod_{q \mid \frac{m}{d}}(a_q - 2 \chi_d(q)) \cdot T_d.
$$
We then apply Theorem \ref{Integrality-1} and get the following equation
$$
S'_m/{\Omega_f^+} + \sum_{\substack{d \mid m \\ 1 < d< m}} \prod_{q \mid \frac{m}{d}}(a_q - 2 \chi_d(q)) \cdot T_d/{\Omega_f^+} + T_m/{\Omega_f^+} = 2^{r(m)} \Psi_m,
$$
where $\Psi_m$ is an integer with $ord_2 (\Psi_m) \geq 0$.
Note that $ord_2(L(E,1)/{\Omega_f^+}) = -1$ and $ord_2(N_{q_i})=1$, we then have
$$
ord_2(S'_m/{\Omega_f^+} ) = r(m)-1
$$
by Lemma \ref{ord2_S'm}. 
Now assume  $r(m) \geq 2$, and that this theorem has been proved for all products of less than $r(m)$ such primes $q_i$, and note that we have assumed the Manin constant is odd, so we have that 
$$
ord_2(T_d/{\Omega_f^+})  = r(d) -1,
$$
with $1<d<m$ and $d|m$. Moreover, we also have $ord_2(a_q - 2 \chi_d(q))=1$.  Consequently, we have that
$$
ord_2(\prod_{q \mid \frac{m}{d}}(a_q - 2 \chi_d(q)) \cdot T_d/{\Omega_f^+}) = r(m)-1.
$$
Hence we have that
$$
ord_2(\sum_{\substack{d \mid m \\ 1 < d< m}} \prod_{q \mid \frac{m}{d}}(a_q - 2 \chi_d(q)) \cdot T_d/{\Omega_f^+}) = r(m), 
$$
by noting that the number of the terms in this summation is even. So we must have 
$$
ord_2(T_m/{\Omega_f^+}) = r(m) -1,
$$
that is
$$
ord_2(L(E^{(m)},1)/{\Omega_{E^{(m)}}^+}) = r(m)-1.
$$
This completes the proof of this theorem.
\end{proof}

When the discriminant of $E$ is positive, we have the following parallel result.
\begin{thm}\label{MainThm-2}
Let $E$ be an optimal elliptic curve over $\BQ$ with conductor $C$, and with odd Manin constant. Assume that $E$ has positive discriminant and satisfies $E(\BQ)[2] \neq 0$ and $ord_2(L(E,1)/{\Omega_f^+}) = 0$. Let $m$ be any integer of the form $m=q_1 q_2 \cdots q_{r(m)}$, with $r(m) \geq 1$ and $q_1, \ldots, q_{r(m)}$ arbitrary distinct odd primes congruent to $1$ modulo $4$, and with $(m,C)=1$. If $ord_2(N_{q_i})=1$ for $1 \leq i \leq r(m)$, then $L(E^{(m)},1) \neq 0$, and we have 
$$
ord_2(L(E^{(m)},1)/{\Omega_{E^{(m)}}^+})=r(m).
$$
\end{thm}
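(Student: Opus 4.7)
The plan is to mirror the induction in the proof of Theorem \ref{MainThm-1}, replacing Theorem \ref{Integrality-1} by Theorem \ref{Integrality-2} and shifting all $2$-adic valuations up by one. The base case $r(m)=1$ is given by \cite[Theorem 1.5]{Zhai}. For $r(m)\geq 2$, I would induct on $r(m)$, assuming the statement for every proper divisor $d$ of $m$ with $r(d)\geq 1$.

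First I would expand
\begin{equation*}
\sum_{d \mid m} T'_{d,m}/\Omega_f^+ \;=\; S'_m/\Omega_f^+ \;+\; \sum_{\substack{d \mid m \\ 1 < d < m}} T'_{d,m}/\Omega_f^+ \;+\; T_m/\Omega_f^+,
\end{equation*}
and apply Theorem \ref{Integrality-2} to deduce that the left-hand side has $2$-adic valuation at least $r(m)+1$. The first term on the right is handled by Lemma \ref{ord2_S'm}: the hypotheses $ord_2(L(E,1)/\Omega_f^+)=0$ and $ord_2(N_{q_i})=1$ give $ord_2(S'_m/\Omega_f^+)=r(m)$. For each intermediate divisor $d$, Lemma \ref{T'_{d,m}} yields
\begin{equation*}
T'_{d,m}/\Omega_f^+ \;=\; \prod_{q \mid m/d}(a_q - 2\chi_d(q)) \cdot T_d/\Omega_f^+,
\end{equation*}
and the inductive hypothesis supplies $ord_2(T_d/\Omega_f^+)=r(d)$. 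A short computation using $q\equiv 1\mod 4$ and $ord_2(N_q)=1$ — so that $a_q=1+q-N_q \equiv 0 \mod 4$ — shows $ord_2(a_q-2\chi_d(q))=1$ for each $q\mid m/d$, whence each intermediate summand has $2$-adic valuation exactly $r(m)$.

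To conclude, I would observe that the number of intermediate divisors of $m$ is $2^{r(m)}-2$, which is even for $r(m)\geq 2$. Thus the middle sum is an even count of terms, each of the shape $2^{r(m)}\cdot(\text{odd})$, so its $2$-adic valuation is at least $r(m)+1$. Combined with the Integrality-2 bound, this forces $ord_2(T_m/\Omega_f^+)=r(m)$. Translating via \eqref{ms2}, the identity $L(E,\chi_m,1)=L(E^{(m)},1)$, and the oddness of the Manin constant converts this into the stated valuation $ord_2(L(E^{(m)},1)/\Omega_{E^{(m)}}^+)=r(m)$. The only delicate step is the final parity argument — exactly as in the proof of Theorem \ref{MainThm-1} — and the whole argument is parallel, the uniform shift by $1$ in all valuations coming from the extra factor of $2$ in Theorem \ref{Integrality-2} compared with Theorem \ref{Integrality-1} accounting for the exponent $r(m)$ here in place of $r(m)-1$ in the negative discriminant case.
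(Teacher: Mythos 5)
Your argument is correct and is essentially identical to the paper's proof: the same induction on $r(m)$, the same use of Theorem \ref{Integrality-2} together with Lemma \ref{ord2_S'm}, Lemma \ref{T'_{d,m}}, the valuation count $ord_2(a_q-2\chi_d(q))=1$, and the parity of the $2^{r(m)}-2$ intermediate divisors to force $ord_2(T_m/\Omega_f^+)=r(m)$ (indeed your ``at least $r(m)+1$'' for the middle sum is the careful form of what the paper asserts). The only discrepancy is trivial: for the positive-discriminant base case the paper invokes Theorem 1.7 of \cite{Zhai} rather than Theorem 1.5.
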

\begin{proof}
We will also prove the theorem by induction on $r(m)$, of course we have got the argument when $r(m)=1$ in \cite[Theorem 1.7]{Zhai}. Note that $ord_2(L(E,1)/{\Omega_f^+}) = 0$ and $ord_2(N_{q_i})=1$, we then have
$$
ord_2(S'_m/{\Omega_f^+} ) = r(m)
$$
by Lemma \ref{ord2_S'm}. 
Now assume  $r(m) \geq 2$, and that this theorem has been proved for all products of less than $r(m)$ such primes $q_i$, and note that we have assumed the Manin constant is odd, so we have that 
$$
ord_2(T_d/{\Omega_f^+})  = r(d),
$$
with $1<d<m$ and $d|m$. Moreover, we also have $ord_2(a_q - 2 \chi_d(q))=1$. Consequently, we have that
$$
ord_2(\prod_{q \mid \frac{m}{d}}(a_q - 2 \chi_d(q)) \cdot T_d/{\Omega_f^+}) = r(m).
$$
Hence we have that
$$
ord_2(\sum_{\substack{d \mid m \\ 1 < d< m}} \prod_{q \mid \frac{m}{d}}(a_q - 2 \chi_d(q)) \cdot T_d/{\Omega_f^+}) = r(m)+1, 
$$
by noting that the number of the terms in this summation is even. So we must have 
$$
ord_2(T_m/{\Omega_f^+}) = r(m),
$$
by the following equation 
$$
S'_m/{\Omega_f^+} + \sum_{\substack{d \mid m \\ 1 < d< m}} \prod_{q \mid \frac{m}{d}}(a_q - 2 \chi_d(q)) \cdot T_d/{\Omega_f^+} + T_m/{\Omega_f^+} = 2^{r(m)+1} \Psi_m,
$$
which is deduced from Theorem \ref{Integrality-2}.
Hence we have 
$$
ord_2(L(E^{(m)},1)/{\Omega_{E^{(m)}}^+}) = r(m).
$$
This completes the proof of this theorem.
\end{proof}

This completes the proof of Theorem \ref{MainThm} by combining the above two theorems and the celebrated theorems of Gross--Zagier and Kolyvagin.

\bigskip

\section{2-part of the Birch--Swinnerton-Dyer conjecture}

In this section, we will prove that the $2$-part of the Birch--Swinnerton-Dyer conjecture holds for some certain families of the quadratic twists of elliptic curves in the previous section. In particular, we will prove the following result, combining with the non-vanishing result in Theorem \ref{MainThm}, to give a proof of Theorem \ref{MainThm-BSD}.

\begin{prop} \label{prop:bsd2}  
Let $E$ be an elliptic curve over $\BQ$ with $E(\BQ)[2] \cong \BZ/2\BZ$. Let $M=q_1\cdots q_r$ be a square free product of $r$ primes in $\mathcal{S}$.
  \begin{enumerate}
  \item Then $E^{(M)}(\BQ)[2] \cong \BZ/2\BZ$.
  \item Let $\ell_0|C$ be the prime such that $ord_2(c_{\ell_0}(E))=1$. Assume that $\ell_0$ splits in $\BQ(\sqrt{M})$ and $ord_2(\prod_\ell c_\ell(E))=1$. Then 
$$
ord_2(c_\ell(E^{(M)}))=
    \begin{cases}
      0, & \text{if } \ell\ne\ell_0, \text{ and }\ell\nmid M,\\
      1, & \text{if } \ell=\ell_0, \text{ or } \ell|M.
    \end{cases}
$$
In particular, $ord_2(\prod_\ell c_\ell(E^{(M)}))=r+1$.
  \item Assume further that $\Sel_2(E)[2]=\BZ/2\BZ$ and $\Sha(E')[2]=0$. If all primes $\ell|2C$ split in $\BQ(\sqrt{M})$, then $1\le\dim\Sel_2(E^{(M)})\le2$. In particular, if $\Sha(E^{(M)})$ is finite, then $\Sha(E^{(M)})[2]=0$ and $\Sel_2(E^{(M)})=\BZ/2\BZ$.
  \end{enumerate}
\end{prop}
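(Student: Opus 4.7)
The plan is to prove the three parts in order, with Part (3) being the main substance. Part (1) is immediate: a quadratic twist acts on $E$ through the character $\chi_M: G_\BQ \to \{\pm 1\} \hookrightarrow \Aut(E)$, and since $-1$ acts trivially on the 2-torsion subgroup, we have $E^{(M)}[2] \cong E[2]$ as Galois modules. Hence $E^{(M)}(\BQ)[2] \cong E(\BQ)[2] \cong \BZ/2\BZ$.

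For Part (2), I would split into three cases. For $\ell \nmid 2CM$, $E^{(M)}$ has good reduction and $c_\ell = 1$. For $\ell \mid 2C$ with $\ell \nmid M$, the splitting hypothesis implies $M \in (\BQ_\ell^\times)^2$, so $E^{(M)} \cong E$ over $\BQ_\ell$ and $c_\ell(E^{(M)}) = c_\ell(E)$, which accounts for the factor of $2$ coming from $\ell_0$ and trivial contributions at the other $\ell \mid 2C$. For $\ell = q \mid M$, $q$ is an odd good-reduction prime for $E$ that ramifies in $\BQ(\sqrt M)$, so $E^{(M)}$ acquires additive reduction of Kodaira type $I_0^*$ at $q$. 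The geometric component group is $(\BZ/2\BZ)^2$, with Frobenius acting through its action on $E[2](\bar\BF_q)$ (the twist by a ramified character only modifies the inertia action, not the unramified Frobenius). Since $q \in \mathcal{S}$ forces $\mathrm{ord}_2(N_q) = 1$, we have $\tilde E(\BF_q)[2] = \BZ/2\BZ$, so Frobenius has exactly two fixed points, giving $c_q(E^{(M)}) = 2$. Multiplying the contributions yields $\mathrm{ord}_2(\prod_\ell c_\ell(E^{(M)})) = 1 + r$.

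For Part (3), I would use 2-descent along the isogeny $\phi: E \to E'$ with dual $\hat\phi$. The exact sequence
$$0 \to \frac{E'(\BQ)[\hat\phi]}{\phi(E(\BQ)[2])} \to \Sel_\phi(E/\BQ) \to \Sel_2(E/\BQ) \to \Sel_{\hat\phi}(E'/\BQ),$$
together with its $E'$-analogue and the hypotheses $\Sel_2(E) = \BZ/2\BZ$ and $\Sha(E')[2] = 0$, pins down $\dim \Sel_\phi(E)$ and $\dim \Sel_{\hat\phi}(E')$. Under the Kummer identifications $H^1(\BQ, E[\phi]) \cong H^1(\BQ, E'[\hat\phi]) \cong \BQ^\times/\BQ^{\times 2}$, both isogeny Selmer groups sit inside $\BQ^\times/\BQ^{\times 2}$ cut out by local conditions. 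For $E^{(M)}$, the splitting hypothesis ensures identical local conditions at $\ell \mid 2C$, while at each $q \mid M$ the local condition changes in a controlled manner governed by the $I_0^*$ reduction from Part (2). A careful dimension count then yields $\dim \Sel_2(E^{(M)}) \leq 2$, and the lower bound $\dim \Sel_2(E^{(M)}) \geq 1$ is immediate from Part (1) via the injection $E^{(M)}(\BQ)/2E^{(M)}(\BQ) \hookrightarrow \Sel_2(E^{(M)})$. Finally, if $\Sha(E^{(M)})$ is finite, then rank zero (from $L(E^{(M)},1) \neq 0$ via Theorem \ref{MainThm} and Kolyvagin) gives $\dim \Sel_2(E^{(M)}) = 1 + \dim_{\BF_2}\Sha(E^{(M)})[2]$; since the Cassels--Tate pairing forces $\dim_{\BF_2}\Sha[2]$ to be even, the bound $\leq 2$ forces $\Sha(E^{(M)})[2] = 0$ and $\Sel_2(E^{(M)}) = \BZ/2\BZ$.

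The main technical obstacle is the explicit local analysis at each $q \mid M$: one must identify the image of the local Kummer map $E'^{(M)}(\BQ_q)/\phi(E^{(M)}(\BQ_q)) \hookrightarrow \BQ_q^\times/\BQ_q^{\times 2}$ (and its $\hat\phi$-analogue) precisely, and verify that these new local conditions combine via a Poitou--Tate style duality count to yield the stated global Selmer bound. This calculation hinges on the full reduction data at $q \in \mathcal{S}$, in particular $\mathrm{ord}_2(N_q) = 1$ together with the inertness of $q$ in both $\BQ(E[2])$ and $\BQ(E'[2])$, which together control the precise codimensions of the local conditions at $q$ for both $\phi$- and $\hat\phi$-Selmer groups of the twist.
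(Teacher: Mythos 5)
Your part (1) matches the paper, and your overall strategy for (2) and (3) --- Tamagawa factors case by case, then a descent along the $2$-isogeny using the five-term exact sequence, a comparison of local conditions, a duality/product formula, and Cassels--Tate at the end --- is the same route the paper takes. But there are two genuine gaps. First, in part (2) you invoke ``the splitting hypothesis'' at every $\ell\mid 2C$ with $\ell\nmid M$; part (2) only assumes that the single prime $\ell_0$ splits in $\BQ(\sqrt M)$ (the assumption that all $\ell\mid 2C$ split belongs to part (3)). For a prime $\ell\mid 2C$, $\ell\ne\ell_0$, which is inert in $\BQ(\sqrt M)$ you cannot conclude $E^{(M)}\cong E$ over $\BQ_\ell$, and $c_\ell(E^{(M)})$ need not equal $c_\ell(E)$. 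The paper instead notes that for $\ell\nmid M$ the twist is unramified at $\ell$ (recall $M\equiv 1 \bmod 4$), so the N\'eron component groups of $E$ and $E^{(M)}$ are unramified quadratic twists of each other; hence $\Phi_{\mathcal{E}}(\BF_\ell)[2]\cong\Phi_{\mathcal{E}^{(M)}}(\BF_\ell)[2]$ and $c_\ell(E^{(M)})$ has the same parity as $c_\ell(E)$, which is all that is needed. (Your treatment of $\ell\mid M$ via type $I_0^*$ is fine and agrees with the paper's computation $\Phi(\BF_q)[2]\cong E^{(M)}(\BQ_q)[2]\cong E(\BF_q)[2]\cong\BZ/2\BZ$.)

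Second, in part (3) the decisive step is precisely the one you set aside as ``the main technical obstacle'': you never carry out the local analysis at the primes $q\mid M$, so the claimed bound $\dim\Sel_2(E^{(M)})\le 2$ is asserted rather than proved. The paper closes this by showing the local conditions for the $\phi$-Selmer group are \emph{unchanged} at every place: the unramified condition at $v\nmid 2CM\infty$; equality at $v\mid M$ by \cite[Lemma 6.8]{Klagsbrun} (this is exactly where $q\in\mathcal{S}$, i.e.\ inertness in $\BQ(E[2])$ and $\BQ(E'[2])$, enters); equality at $v\mid 2C$ from the splitting hypothesis; and at $\infty$ since $M>0$. Hence $\Sel_\phi(E^{(M)})\cong\Sel_\phi(E)\cong\BZ/2\BZ$, and then the product formula of \cite[Theorem 6.4]{Klagsbrun} for $\lvert\Sel_\phi\rvert/\lvert\Sel_{\hat\phi}\rvert$ transfers verbatim to the twist, giving $\Sel_{\hat\phi}(E'^{(M)})\cong\BZ/2\BZ$ and the bound $\dim\Sel_2(E^{(M)})\le\dim\Sel_\phi(E^{(M)})+\dim\Sel_{\hat\phi}(E'^{(M)})=2$. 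You would need to supply this (or an equivalent explicit computation of the local Kummer images at $q\mid M$) for the proof to be complete. A smaller issue: in the final step you appeal to $L(E^{(M)},1)\ne 0$ via Theorem \ref{MainThm} to get rank $0$, but Proposition \ref{prop:bsd2} does not carry those analytic hypotheses; the paper argues instead from the bounds $1\le\dim\Sel_2(E^{(M)})\le 2$ together with the Cassels--Tate pairing forcing $\Sha(E^{(M)})[2]$ to have square order.
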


\begin{proof}
(1) It follows from the facts that $E[2]\cong E^{(M)}[2]$ as $G_\BQ$-modules and $E(\BQ)[2]\cong \BZ/2\BZ$.

(2) First consider $\ell\ne \ell_0$ and $\ell\nmid M$. Let $\mathcal{E}$ and $\mathcal{E}^{(M)}$ be the N\'{e}ron model over $\BZ_\ell$ of $E$ and $E^{(M)}$ respectively. Note that $E^{(M)}/\BQ_\ell$ is the unramified quadratic twist of $E^{(M)}$. Since N\'{e}ron models commute with unramified base change, we know that the component groups $\Phi_\mathcal{E}$ and $\Phi_{\mathcal{E}^{(M)}}$ are quadratic twists of each other as $\Gal(\overline{\mathbb{F}}_\ell/\mathbb{F}_\ell)$-modules. In particular, $\Phi_\mathcal{E}[2]\cong\Phi_{\mathcal{E}^{(M)}}[2]$ as $\Gal(\overline{\mathbb{F}}_\ell/\mathbb{F}_\ell)$-modules and thus $$\Phi_\mathcal{E}(\mathbb{F}_\ell)[2]\cong\Phi_{\mathcal{E}^{(M)}}(\mathbb{F}_\ell)[2].$$ It follows that $c_\ell(E)$ and $c_\ell(E^{(M)})$ have the same parity, and hence $c_\ell(E^{(M)})$ is odd.
    
Next consider $\ell | M$. Since $E^{(M)}$ has additive reduction at $\ell$ and $\ell$ is odd, we know that $$\Phi_{\mathcal E^{(M)}}(\mathbb{F}_\ell)[2]\cong E^{(M)}(\BQ_\ell)[2].$$ On the other hand, $E^{(M)}(\BQ_\ell)[2]\cong E(\BQ_\ell)[2]\cong E(\mathbb{F}_\ell)[2]$, which is $\BZ/ 2 \BZ$ since $\ell\in \mathcal{S}$. Thus $ord_2(c_\ell(E^{(M)}))=1$ for any $\ell|M$.

Finally consider $\ell=\ell_0$. By our extra assumption that $\ell_0$ is split in $\BQ(\sqrt{M})$, we know that $E^{(M)}/\BQ_\ell$ and $E/\BQ_\ell$ are isomorphic, hence $c_\ell(E^{(M)})=c_\ell(E)$, which has 2-adic valuation 1.

(3) Let $\phi: E \to E'$ be the isogeny of degree $2$, and $\hat\phi: E' \to E$ be the dual isogeny. We use the following well-known exact sequence relating the 2-Selmer group and $\phi$, $\hat \phi$-Selmer groups (see \cite[Lemma 6.1]{Schaefer}):
$$
0 \rightarrow E'(\BQ)[\hat\phi]/\phi(E(\BQ)[2]) \rightarrow \Sel_{\phi}(E) \rightarrow \Sel_{2}(E) \rightarrow \Sel_{\hat{\phi}}(E') \rightarrow\Sha(E')[\hat\phi]/\phi(\Sha(E)[2]) \rightarrow 0.
$$
By our assumption $\Sel_2(E)=\BZ/2\BZ$ and $\Sha(E')[2]=0$, it follows from the above exact sequence that 
$$
\Sel_\phi(E) \cong E'(\BQ)[\hat\phi]/\phi(E(\BQ)[2])\cong \BZ/2\BZ,\quad \Sel_{\hat \phi}(E')\cong \Sel_2(E)\cong\BZ/2\BZ.
$$ 
By abuse of notation we denote the 2-isogeny $E^{(M)} \rightarrow E'^{(M)}$ again by $\phi$ (note that ${E^{(M)}}'=E'^{(M)}$).

We first claim that the isomorphism of $G_\BQ$-representations $E^{(M)}[\phi]\cong E[\phi]$ induces an isomorphism of $\phi$-Selmer groups 
$$
\Sel_\phi(E^{(M)})\cong \Sel_{\phi}(E).
$$
For $v$ a place of $\BQ$, we denote  the local condition defining the $\phi$-Selmer group $\Sel_\phi(E)$ to be 
$$
\mathcal{L}_v(E):=\im (E'(\BQ_v)/\phi(E(\BQ_v))) \subseteq H^1(\BQ_v, E[\phi]).
$$ 
To show the claim, it suffices to prove for any $v$, 
$$
\mathcal{L}_v(E^{(M)})=\mathcal{L}_v(E).
$$ 
We now prove the claim by the following four cases. \\
(1) For $v \nmid 2CM\infty$, then both $E$ and $E'$ have good reduction at $v\ne2$ and hence  
$$
\mathcal{L}_v(E^{(M)})=\mathcal{L}_v(E)=H^1_\mathrm{ur}(\BQ_v, E[\phi])
$$ 
is the unramified condition. \\
(2) For $v|M$, the desired equality of local condition at $v$ follows from \cite[Lemma 6.8]{Klagsbrun}. \\
(3) For $v|2C$, by assumption we have $v$ splits in $\BQ(\sqrt{M})$, hence $E^{(M)}$ and $E$ are isomorphic over $\BQ_v$, and $E'^{(M)}$ and $E'$ are isomorphic over $\BQ_v$. The desired equality of local condition at $v$ follows. \\
(4) For $v=\infty$, since $M>0$, we know that $E^{(M)}$ and $E$ are isomorphic over $\mathbb{R}$, and $E'^{(M)}$ and $E'$ are isomorphic over $\mathbb{R}$. The desired equality of local condition at $v$ again follows. \\
This completes the proof of the claim.

Now by \cite[Theorem 6.4]{Klagsbrun}, we have 
$$
\frac{\lvert\Sel_\phi(E)\rvert}{\lvert\Sel_{\hat \phi}(E')\rvert}=\prod_v \frac{\lvert\mathcal{L}_v(E)\rvert}{2},\quad \frac{\lvert\Sel_\phi(E^{(M)})\rvert}{\lvert\Sel_{\hat \phi}(E'^{(M)})\rvert}=\prod_v \frac{\lvert\mathcal{L}_v(E^{(M)})\rvert}{2}.
$$
Since we have shown that $\mathcal{L}_v(E)=\mathcal{L}_v(E^{(M)})$ for every place $v$ of $\BQ$, we obtain 
$$
\frac{\lvert\Sel_\phi(E)\rvert}{\lvert\Sel_{\hat \phi}(E')\rvert}=\frac{\lvert\Sel_\phi(E^{(M)})\rvert}{\lvert\Sel_{\hat \phi}(E'^{(M)})\rvert}.
$$ 
Hence $\Sel_{\hat \phi}(E'^{(M)})\cong \BZ/2\BZ$. Now the well-known exact sequence for $E^{(M)}$ implies  
$$
\dim \Sel_2(E^{(M)})\le \dim \Sel_\phi(E^{(M)})+\dim \Sel_{\hat \phi}(E'^{(M)})=1+1=2.
$$ 
On the other hand, $E^{(M)}(\BQ)[2]\cong \BZ/2\BZ$, so $\dim \Sel_2(E^{(M)})\geq1$. If $\Sha(E^{(M)})[2]$ is finite, then by the Cassels--Tate pairing $\Sha(E^{(M)})[2]$ has square order, hence by the previous bounds it must be 0, as desired. 
\end{proof}

We are now ready to give the proof of Theorem \ref{MainThm-BSD}. 
\begin{thm} (Theorem \ref{MainThm-BSD})
Let $E$ and $M$ be as in Theorem \ref{MainThm}. Assume further that
\begin{enumerate}
  \item $\Sha(E')[2]=0$; 
  \item all primes $\ell$ which divide $2C$ split in $\BQ(\sqrt M)$; 
  \item the $2$-part of the Birch and Swinnerton-Dyer conjecture holds for $E$.
\end{enumerate}
Then the 2-primary component of $\Sha(E^{(M)})$ is zero, and the $2$-part of the Birch and Swinnerton-Dyer conjecture holds for $E^{(M)}$.
\end{thm}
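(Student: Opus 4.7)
The plan is to use hypothesis (3) to extract the 2-adic BSD invariants of $E$, feed them into Proposition \ref{prop:bsd2} to transfer them to $E^{(M)}$, and assemble the 2-part of the BSD formula for $E^{(M)}$ using the $L$-value from Theorem \ref{MainThm}.

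First I would pin down $|E(\BQ)_{\tor}|_2 = 2$. Since $M = q_1\cdots q_r$ is a nonempty product of primes in $\mathcal{S}$, pick any such $q=q_i$. Then $E$ has good reduction at the odd prime $q$, and reduction gives an injection $E(\BQ)_{\tor}[2^\infty]\hookrightarrow E(\BF_q)[2^\infty]$. The definition of $\mathcal{S}$ forces $ord_2(|E(\BF_q)|) = ord_2(N_q) = 1$, so $|E(\BQ)_{\tor}|_2 \leq 2$, and the assumption $E(\BQ)[2]\cong \BZ/2\BZ$ provides the opposite inequality. Combining hypothesis (3) with $ord_2(L^{(alg)}(E,1))=-1$,
$$
-1 = ord_2\Bigl(\prod_\ell c_\ell(E)\Bigr) + ord_2(|\Sha(E)|) - 2.
$$
Since $|\Sha(E)|$ is a perfect square by Cassels--Tate, $ord_2(|\Sha(E)|)$ is even; together with non-negativity of the Tamagawa term, the unique solution is $ord_2(\prod_\ell c_\ell(E))=1$ and $\Sha(E)[2]=0$. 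By Kolyvagin, $L(E,1)\neq 0$ implies that $E(\BQ)$ is finite, and then the 2-descent sequence $0\to E(\BQ)/2E(\BQ)\to \Sel_2(E)\to \Sha(E)[2]\to 0$ collapses to $\Sel_2(E)\cong E(\BQ)_{\tor}/2E(\BQ)_{\tor}\cong \BZ/2\BZ$.

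This verifies the remaining hypotheses of Proposition \ref{prop:bsd2}: the splitting hypothesis of the theorem covers both the hypothesis in part (2) (since the unique prime $\ell_0 \mid C$ with $ord_2(c_{\ell_0}(E))=1$ splits in $\BQ(\sqrt{M})$) and that of part (3); and we have established $\Sel_2(E)\cong \BZ/2\BZ$ as well as $ord_2(\prod_\ell c_\ell(E))=1$. Applying Proposition \ref{prop:bsd2}: part (1) gives $E^{(M)}(\BQ)[2]\cong \BZ/2\BZ$; part (2) gives $ord_2(\prod_\ell c_\ell(E^{(M)}))=r+1$; and since $L(E^{(M)},1)\neq 0$ by Theorem \ref{MainThm} and thus $\Sha(E^{(M)})$ is finite by Kolyvagin, part (3) forces $\Sel_2(E^{(M)})\cong \BZ/2\BZ$ and $\Sha(E^{(M)})[2]=0$, from which $\Sha(E^{(M)})[2^\infty]=0$ by finiteness.

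To pin down $|E^{(M)}(\BQ)_{\tor}|_2$, I would use Chebotarev and the positive density of $\mathcal{S}$ to choose a prime $\ell\in\mathcal{S}$, coprime to $2CM$, that splits in $\BQ(\sqrt{M})$. Then $E^{(M)}$ is isomorphic to $E$ over $\BQ_\ell$, so $|E^{(M)}(\BF_\ell)|=N_\ell$ has 2-adic valuation exactly $1$, and the reduction injection once again gives $|E^{(M)}(\BQ)_{\tor}|_2 = 2$. Assembling everything, the right-hand side of the 2-part of BSD for $E^{(M)}$ is $(r+1) + 0 - 2 = r-1$, matching $ord_2(L^{(alg)}(E^{(M)},1)) = r-1$ from Theorem \ref{MainThm}. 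The main technical subtlety lies in this torsion computation, which requires the auxiliary Chebotarev argument to pin down a Frobenius-compatible prime of good reduction; everything else is direct bookkeeping given Proposition \ref{prop:bsd2} and Theorem \ref{MainThm}.
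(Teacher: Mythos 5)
Your proposal is correct and follows essentially the same route as the paper: deduce $\Sha(E)[2]=0$, $\Sel_2(E)\cong\BZ/2\BZ$ and $ord_2(\prod_\ell c_\ell(E))=1$ from hypothesis (3) together with $ord_2(L^{(alg)}(E,1))=-1$, then feed these into Proposition \ref{prop:bsd2}, invoke Theorem \ref{MainThm} for the $L$-value and the finiteness of $\Sha(E^{(M)})$, and assemble the $2$-part of the formula for $E^{(M)}$. The only difference is that you explicitly pin down $ord_2(|E(\BQ)_{\mathrm{tor}}|)=ord_2(|E^{(M)}(\BQ)_{\mathrm{tor}}|)=1$ (by reducing modulo a prime of $\mathcal{S}$, respectively modulo an auxiliary Chebotarev-chosen prime in $\mathcal{S}$ that splits in $\BQ(\sqrt{M})$, which exists since $\BQ(\sqrt{M})$ is ramified only at primes dividing $M$ and hence is linearly disjoint from the relevant $2$-division fields), a step the paper's proof leaves implicit but which is genuinely needed, since $E(\BQ)[2]\cong\BZ/2\BZ$ alone does not rule out rational $4$-torsion.
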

\begin{proof}
If the 2-part of the Birch and Swinnerton-Dyer conjecture holds for $E$, then 
$$
ord_2\left(\frac{\prod_{\ell}c_\ell(E)\cdot \Sha(E)}{\lvert E(\BQ)_\mathrm{tor} \rvert^2}\right)=-1.
$$ 
Since $E(\BQ)[2]\cong \BZ/ 2 \BZ$ and $\Sha(E)[2]$ has square order, we know that $\Sha(E)[2]=0$, $\Sel_2(E)=\BZ/2\BZ$ and $ord_2(\prod_\ell c_\ell(E))=1$. By Theorem \ref{MainThm}, we have 
$$
ord_2(L^{(alg)}E^{(M)},1)=r-1, 
$$
and $\Sha(E^{(M)})$ is finite. The assumptions of Proposition \ref{prop:bsd2} are all satisfied, and hence 
$$
E^{(M)}(\BQ)[2]=\BZ/2\BZ, \quad ord_2(\prod_{p \mid CM}(c_p(E^{(M)}))=r+1, \quad \Sha(E^{(M)})[2]=0.
$$ 
We then have
$$
ord_2\left(\frac{\prod_{p}c_p(E^{M})\cdot \Sha(E^{(M)})}{\lvert E^{(M)}(\BQ)_\mathrm{tor}\rvert^2}\right)=r-1.
$$ 
Therefore, the 2-part of the Birch and Swinnerton-Dyer conjecture holds for $E^{(M)}$.
\end{proof}

\bigskip

\section{Applications}

In this section we will apply Theorem \ref{MainThm} and Theorem \ref{MainThm-BSD} to give some families of quadratic twists of elliptic curves which satisfy the 2-part of the exact Birch--Swinnerton-Dyer formula. In particular, we give a full discussion of quadratic twists of $X_0(14)$, and some analogous examples on the quadratic twists of `$34A1$', `$56B1$', and `$99C1$' (in Cremona's label), for which we will not give the proofs in details since they are similar to the case of $X_0(14)$, and all the numerical examples are verified by `Magma'. Moreover, we also include a family of elliptic curves satisfying the full Birch--Swinnerton-Dyer conjecture. More examples have been included in Wan's paper \cite{Wan}.

In the following, we always denote $A'$ to be the $2$-isogenous curve of a given elliptic curve $A$ defined over $\BQ$. For each square free integer $M$, prime to the conductor of $A$, with $M \equiv 1 \mod 4$, as usual, we define
$$
L^{(alg)}(A^{(M)}, 1) = L(A^{(M)}, 1)/\Omega_{A^{(M)}}.
$$

\smallskip

\subsection{Quadratic twists of \texorpdfstring{$X_0(14)$}{}}

Let $A$ be the modular curve $X_0(14)$, which has genus $1$, and which we view as an elliptic curve by taking $[\infty]$ to be the origin of the group law. It has a minimal Weierstrass equation given by
$$
A: y^2 + xy + y= x^3 + 4x - 6, 
$$
which has non-split multiplicative reduction at $2$. Moreover, $A(\BQ)= \BZ/6\BZ$. The discriminant of $A$ is $-2^6 \cdot 7^3$. Also, a simple computation shows that $\BQ(A[2])=\BQ(\sqrt{-7})$. Writing $L(A, s)$ for the complex $L$-series of $A$, we have
$$
L(A,1)/\Omega_A^+ = 1/6.
$$
Let $q_1,\ldots, q_r$ be $r \geq 0$ distinct primes, which are all $\equiv 1 \mod 4$.

Recall that the $L$-function of an elliptic curve $E$ over $\BQ$ is defined as an infinite Euler product
$$
L(E,s)=\prod_{q \nmid C} (1 - a_q q^{-s} + q^{1-2s})^{-1} \prod_{q \mid C} (1 - a_q q^{-s})^{-1} =: \sum a_n n^{-s},
$$
where
$$
a_q
=
\left\{
\begin{array}{llll}
q+1-\#E(\BF_q) & \hbox{if $E$ has good reduction at $q$,} \\
1              & \hbox{if $E$ has split multiplicative reduction at $q$,} \\
-1             & \hbox{if $E$ has non-split multiplicative reduction at $q$,} \\
0              & \hbox{if $E$ has additive reduction at $q$.}
\end{array}
\right.
$$
Here we give a result of the behavior of the coefficients $a_q$ of the $L$-function of elliptic curve $A$.

\begin{thm}\label{Thm a_q 14}
Let $q$ be an odd prime with $(q,14)=1$. Then we have that
$$
a_2=-1,\ a_7=1,
$$
and
$$
a_q
\equiv
\left\{
\begin{array}{ll}
2 \ \mod 4 & \hbox{if $q \equiv 1 \mod 8$,} \\
2 \ \mod 4 & \hbox{if $q \equiv 3 \mod 8$ and $q$ is inert in $\BQ(\sqrt{-7})$,} \\
2 \ \mod 4 & \hbox{if $q \equiv 5 \mod 8$ and $q$ splits in $\BQ(\sqrt{-7})$,} \\
0 \ \mod 4 & \hbox{if $q \equiv 7 \mod 8$,} \\
0 \ \mod 4 & \hbox{if $q \equiv 3 \mod 8$ and $q$ splits in $\BQ(\sqrt{-7})$,} \\
0 \ \mod 4 & \hbox{if $q \equiv 5 \mod 8$ and $q$ is inert in $\BQ(\sqrt{-7})$.}
\end{array}
\right.
$$
\end{thm}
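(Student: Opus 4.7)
The plan is to treat the bad primes $q=2,7$ directly from the reduction type of $A$, and to analyze odd $q\neq 7$ through the mod-$4$ Galois representation $\bar\rho_4\colon G_\BQ \to \GL_2(\BZ/4\BZ)$ of $A$. For $q=2$, translating the node of $A \bmod 2$ (at $(1,1)$) to the origin yields a tangent cone equal to $y^2+xy+x^2$, which is irreducible over $\BF_2$ (it is the norm form of $\BF_4/\BF_2$), so $A$ has non-split multiplicative reduction at $2$ and $a_2=-1$. For $q=7$, the singular point of $A \bmod 7$ is $(2,2)$; after translating to the origin the tangent cone is again $y^2+xy+x^2$, but now its discriminant $-3\equiv 4$ is a square mod $7$, so it factors into two distinct linear forms over $\BF_7$, giving split multiplicative reduction and $a_7=1$.

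For odd $q$ with $(q,14)=1$, fix a $\BZ/4\BZ$-basis $\{Q_1,Q_2\}$ of $A[4]$ with $P_1:=2Q_1=(1,-1)$ the rational $2$-torsion point and $P_2:=2Q_2$ one of the non-rational $2$-torsion points, so $\BQ(P_2)=\BQ(A[2])=\BQ(\sqrt{-7})$. Writing $\bar\rho_4(\Frob_q) = \bigl(\begin{smallmatrix} m_{11} & m_{12} \\ m_{21} & m_{22} \end{smallmatrix}\bigr) \pmod 4$, its mod-$2$ reduction is the identity when $q$ splits in $\BQ(\sqrt{-7})$ and $\bigl(\begin{smallmatrix}1&1\\0&1\end{smallmatrix}\bigr)$ when $q$ is inert. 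Throughout we combine this with $\tr\bar\rho_4(\Frob_q)\equiv a_q \pmod 4$ and $\det\bar\rho_4(\Frob_q)\equiv q \pmod 4$.

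In the split case, $m_{12},m_{21}\in\{0,2\}$ and $m_{11},m_{22}\in\{1,3\}$, so $\det \equiv m_{11}m_{22} \pmod 4$; checking the four possible pairs shows $\det\equiv 1\pmod 4$ iff $m_{11}=m_{22}$ iff $\tr\equiv 2\pmod 4$, hence $a_q\equiv 2\pmod 4$ when $q\equiv 1\pmod 4$ and $a_q\equiv 0\pmod 4$ when $q\equiv 3\pmod 4$, settling the four split subcases. In the inert case, writing $m_{11}=1+2a$, $m_{22}=1+2d$, $m_{12}=1+2b$, $m_{21}=2c$ with $a,b,c,d\in\{0,1\}$ gives $\tr\equiv 2(1+a+d)\pmod 4$ and $\det\equiv 1+2(a+d+c)\pmod 4$, so the determinant condition fixes $a+d+c$ mod $2$ and the trace mod $4$ is then determined by the parity of $c$ alone.

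To pin down $c$, we use the rational $2$-isogeny $\phi\colon A\to A'$ with $\ker\phi=\langle P_1\rangle$. A V\'elu computation (placing $A$ in short Weierstrass form $Y^2=X^3+X^2+72X-368$, translating the rational $2$-torsion to the origin, and quotienting) yields $A'\colon Y^2=X^3-26X^2-343X$, whose non-rational $2$-torsion is $(13\pm 16\sqrt{2},0)$; hence $\BQ(A'[2])=\BQ(\sqrt{2})$. Since $\hat\phi(\phi(Q_1))=2Q_1=P_1\neq 0$, the point $\phi(Q_1)\in A'[2]$ is not in $\ker\hat\phi$, and so must be one of the non-rational $2$-torsion points of $A'$, defined over $\BQ(\sqrt{2})$. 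Consequently $c\equiv 0\pmod 2$ iff $\Frob_q$ fixes $\phi(Q_1)$ iff $q$ splits in $\BQ(\sqrt{2})$ iff $q\equiv\pm 1\pmod 8$; combined with the determinant parity this gives, in the inert case, $a_q\equiv 2\pmod 4$ precisely for $q\equiv 1,3\pmod 8$ and $a_q\equiv 0\pmod 4$ precisely for $q\equiv 5,7\pmod 8$, matching all six subcases of the statement. The main technical input is the V\'elu-type calculation identifying $\BQ(A'[2])=\BQ(\sqrt{2})$; everything else reduces to routine $2\times 2$ linear algebra modulo $4$.
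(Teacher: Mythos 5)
Your argument is correct, and it establishes all six congruences (including the bad primes $a_2=-1$, $a_7=1$, where your tangent-cone computations at the singular points mod $2$ and mod $7$ check out). It differs in technique, though not in its arithmetic inputs, from the paper's proof. The paper never writes down the mod-$4$ Galois representation: it works with the point count $N_q=\#A(\BF_q)$, using that the isogenous curves $A$ and $A'$ have the same number of points over $\BF_q$, that a prime splitting in $\BQ(A[2])=\BQ(\sqrt{-7})$ (resp.\ in $\BQ(A'[2])=\BQ(\sqrt{2})$, read off from the minimal model $y^2+xy+y=x^3-36x-70$ of $A'$) forces $(\BZ/2\BZ)^2\subseteq A(\BF_q)$ (resp.\ $\subseteq A'(\BF_q)$) and hence $4\mid N_q$, and that for $q$ inert in both fields one gets $A(\BF_q)[4]=A(\BF_q)[2]\cong\BZ/2\BZ$ because $\BQ(\sqrt{2})$ lies in the field generated by a $4$-division point above the rational $2$-torsion point; the theorem then follows from $a_q=q+1-N_q$ and the splitting law for $\BQ(\sqrt{2})$. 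You instead do linear algebra on $\bar\rho_4(\Frob_q)$ with $\tr\equiv a_q$ and $\det\equiv q \pmod 4$: the split case is a four-entry check, and in the inert case everything reduces to the parity of the lower-left entry, which you detect through the action of $\Frob_q$ on $\phi(Q_1)$, a non-rational point of $A'[2]$. That detection is the same mechanism as the paper's containment $\BQ(\sqrt{2})\subset\BQ(A[4]^*)$, just packaged through the isogeny (and your V\'elu computation $A'\colon Y^2=X^3-26X^2-343X$, with $2$-torsion $x$-coordinates $13\pm16\sqrt{2}$, agrees with the paper's identification of $\BQ(A'[2])$). What your route buys: you avoid invoking the equality of point counts for isogenous curves and the small structural step that the $2$-primary part of $A(\BF_q)$ is cyclic in the doubly inert case, and the matrix bookkeeping makes the separate roles of $q\bmod 4$ (via the determinant) and of the two splitting conditions completely transparent; what the paper's route buys is brevity, since everything is phrased as divisibility of $N_q$ by $4$.
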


\begin{proof}
The assertions for $a_2$ and $a_7$ are clear, since $A$ has non-split multiplicative reduction at $2$ and split multiplicative reduction at $7$.

Let $A'$ denote the $2$-isogenous curve of $A$, which has a minimal Weierstrass equation given by
$$
A': y^2+xy+y=x^3-36x-70.
$$
It is easy to get that $\BQ(A'[2])=\BQ(\sqrt{2})$. For $a_q$, first note that the $2$-division field $\BQ(A[2])=\BQ(\sqrt{-7})$ and $\BQ(A'[2])=\BQ(\sqrt{2})$, and we have the same $L$-function of $A$ and $A'$. So we have that $A(\BF_q)[2] \cong \BZ/2\BZ \times \BZ/2\BZ$ when $q$ splits in $\BQ(\sqrt{-7})$, and $A'(\BF_q)[2] \cong \BZ/2\BZ \times \BZ/2\BZ$ when $q$ splits in $\BQ(\sqrt{2})$. Since $A(\BF_q)[2]$ and $A'(\BF_q)[2]$ are subgroups of $A(\BF_q)$ and $A'(\BF_q)$, respectively, we have that $4 \mid \#A(\BF_q)$ and $4 \mid \#A'(\BF_q)$. While $q$ is both inert in $\BQ(\sqrt{2})$ and $\BQ(\sqrt{-7})$, we have that $A(\BF_q)[2] \cong \BZ/2\BZ$. It is easy to compute that $\BQ(\sqrt{2})$ is a subfield of $\BQ(A[4]^*)$, where $A[4]^*$ means any one of the $4$-division points which is deduced from the non-trivial rational $2$-torsion point of $A(\BQ)$. But $q$ is inert in $\BQ(\sqrt{2})$, that means $A(\BF_q)[4] = A(\BF_q)[2] \cong \BZ/2\BZ$. Hence $2 \mid \#A(\BF_q)$, but $4 \nmid \#A(\BF_q)$. Hence
$$
N_q=\#A(\BF_q)
\equiv
\left\{
\begin{array}{ll}
2 \ \mod 4 & \hbox{if $q$ is both inert in $\BQ(\sqrt{2})$ and $\BQ(\sqrt{-7})$,} \\
0 \ \mod 4 & \hbox{if $q$ splits in $\BQ(\sqrt{2})$ or $\BQ(\sqrt{-7})$.}
\end{array}
\right.
$$

Then all the assertions follow by applying $a_q=q+1-N_q$. This completes our proof.
\end{proof}

We then can apply Theorem \ref{MainThm} to get the following result.
\begin{thm}\label{Nonvanishing-14A1}
Let $M$ be any integer of the form $M=q_1 q_2 \cdots q_r$, $r \geq 1$, with $q_1, \ldots, q_r$ arbitrary distinct odd primes all congruent to $5$ modulo $8$, and inert in $\BQ(\sqrt{-7})$. We then have
$$
ord_2(L^{(alg)}(A^{(M)},1))=r-1.
$$
In particular, we have $L(A^{(M)},1) \neq 0$.
\end{thm}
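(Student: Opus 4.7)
The plan is to deduce Theorem~\ref{Nonvanishing-14A1} as a direct application of Theorem~\ref{MainThm} to $A = X_0(14)$. This reduces to checking the three hypotheses on $A$, together with the requirement that each $q_i$ lies in the set $\mathcal{S}$.

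For hypothesis (1), $X_0(14)$ is an optimal elliptic curve of conductor $14$, which is far below the bound mentioned in Remark 1.4, so by Cremona's results its Manin constant equals $\pm 1$ and in particular is odd. For hypothesis (2), the torsion subgroup $A(\BQ) = \BZ/6\BZ$ has $A(\BQ)[2] \cong \BZ/2\BZ$. For hypothesis (3), the discriminant $\Delta_A = -2^6 \cdot 7^3$ is negative, so $A(\BR)$ is connected and $\Omega_A = \Omega_A^+$; combined with the known value $L(A,1)/\Omega_A^+ = 1/6$, this gives $L^{(alg)}(A,1) = 1/6$ and hence $ord_2(L^{(alg)}(A,1)) = -1$.

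It remains to verify that each $q_i$ belongs to $\mathcal{S} = \{q \equiv 1 \bmod 4 : q \nmid 14,\ ord_2(N_q) = 1\}$. The assumption $q_i \equiv 5 \bmod 8$ immediately yields $q_i \equiv 1 \bmod 4$ and $q_i \notin \{2, 7\}$, so $q_i \nmid 14$. For the $2$-adic valuation of $N_{q_i}$, the relevant row of Theorem~\ref{Thm a_q 14}, namely ``$q \equiv 5 \bmod 8$ and $q$ inert in $\BQ(\sqrt{-7})$'', gives $a_{q_i} \equiv 0 \bmod 4$. Since $q_i + 1 \equiv 6 \bmod 8$, we obtain $N_{q_i} = q_i + 1 - a_{q_i} \equiv 2 \bmod 4$, so $ord_2(N_{q_i}) = 1$, as required.

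With all the hypotheses verified, Theorem~\ref{MainThm} applies directly and gives both $L(A^{(M)},1) \neq 0$ and $ord_2(L^{(alg)}(A^{(M)},1)) = r - 1$. There is essentially no genuine obstacle here: the heavy lifting was done in establishing Theorem~\ref{MainThm}, and the only ingredient specific to $X_0(14)$ is the congruence analysis of $a_q \bmod 4$ supplied by Theorem~\ref{Thm a_q 14}, which in turn rests on identifying the $2$-division fields $\BQ(A[2]) = \BQ(\sqrt{-7})$ and $\BQ(A'[2]) = \BQ(\sqrt{2})$.
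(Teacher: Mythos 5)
Your proof is correct and follows essentially the same route as the paper: one verifies the hypotheses of the main non-vanishing theorem for $A=X_0(14)$ (odd Manin constant, $A(\BQ)[2]\cong\BZ/2\BZ$, $ord_2(L^{(alg)}(A,1))=-1$ since $\Delta_A<0$ and $L(A,1)/\Omega_A^+=1/6$), and uses Theorem \ref{Thm a_q 14} to see that every prime $q\equiv 5 \bmod 8$ inert in $\BQ(\sqrt{-7})$ has $ord_2(N_q)=1$, hence lies in $\mathcal{S}$. The only cosmetic difference is that the paper invokes Theorem \ref{MainThm-1} (the negative-discriminant case) directly, while you invoke Theorem \ref{MainThm}, which amounts to the same thing.
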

\begin{proof}
According to Theorem \ref{Thm a_q 14}, when $q_i \equiv 3, 5 \mod 8$ and $q_i$ is inert in $\BQ(\sqrt{-7})$, we have $ord_2(N_{q_i})=1$ for $1 \leq i \leq r$. The theorem then follows immediately by Theorem \ref{MainThm-1}.
\end{proof}

We next prove the $2$-part of the Birch and Swinnerton-Dyer conjecture for all the twists $E^{(M)}$ in Theorem \ref{MainThm-BSD}. Note that $A^{(M)}$ has bad additive reduction at all primes dividing $M$. Write $c_q(A^{(M)})$ for the Tamagawa factor of $A^{(M)}$ at a finite odd prime $q \mid M$. We then have that
\begin{equation}\label{A_Tam_q}
ord_2(c_q(A^{(M)})) = ord_2(\# A(\BQ_q)[2]).
\end{equation}
We apply the results in \cite[\S7]{Coates1} on the Tamagawa factors of $A^{(M)}$, and we then get the following result.

\begin{prop}\label{Tam_A-14}
For all odd square-free integers $M$ with $(M,14)=1$, we have (i) $A^{(M)}(\BR)$ has one connected component, (ii) $ord_2(c_2(A^{(M)})) = 1$, $ord_2(c_7(A^{(M)})) = 0$, (iii) $ord_2(c_q(A^{(M)})) = 1$ if $q$ does not split in $\BQ(\sqrt{-7})$, and (iv) $ord_2(c_q(A^{(M)})) = 2$ if $q$ splits in $\BQ(\sqrt{-7})$.
\end{prop}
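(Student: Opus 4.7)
The plan is to handle the four assertions separately according to the local behaviour at the archimedean place, the ``old'' bad primes $\ell \in \{2, 7\}$ dividing the conductor of $A$, and the ``new'' bad primes $q \mid M$ introduced by the twist. For part (i), I would observe that quadratic twisting rescales the discriminant by $M^6$, so $\mathrm{sign}(\Delta_{A^{(M)}}) = \mathrm{sign}(\Delta_A) = -1$ (independent of the sign of $M$, since $M^6 > 0$). An elliptic curve over $\BR$ has a connected real locus precisely when its discriminant is negative, so $A^{(M)}(\BR)$ has a single component.

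For parts (iii) and (iv), the key input is formula \eqref{A_Tam_q}, combined with the fact that quadratic twisting preserves the $2$-torsion Galois module: $A^{(M)}[2] \cong A[2]$ as $G_\BQ$-modules, so $A^{(M)}(\BQ_q)[2] \cong A(\BQ_q)[2]$. Since $\BQ(A[2]) = \BQ(\sqrt{-7})$ and $A(\BQ)[2] \cong \BZ/2\BZ$, I would deduce $\#A(\BQ_q)[2] = 4$ when $q$ splits in $\BQ(\sqrt{-7})$ and $\#A(\BQ_q)[2] = 2$ when $q$ is inert or ramified in $\BQ(\sqrt{-7})$. Applying \eqref{A_Tam_q} then yields (iv) and (iii) respectively.

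For part (ii), I would read off the Kodaira types of $A$ from the discriminant valuations $ord_2 \Delta_A = 6$, $ord_7 \Delta_A = 3$ together with $a_2 = -1$, $a_7 = +1$ (from Theorem \ref{Thm a_q 14}): non-split $I_6$ at $2$ and split $I_3$ at $7$. The split multiplicative reduction at $7$ with $n = 3$ odd forces every twist $A^{(M)}$ to have odd Tamagawa factor at $7$ (either $c_7 = 3$ or $c_7 = 1$ depending on whether $M$ is a square in $\BQ_7^\times$), so $ord_2 c_7(A^{(M)}) = 0$. For the non-split $I_6$ reduction at $2$, I would split into subcases according to whether $2$ splits ($M \equiv 1 \mod 8$), is inert ($M \equiv 5 \mod 8$), or ramifies ($M \equiv 3 \mod 4$) in $\BQ(\sqrt M)$: the first two subcases give $c_2 = 2$ and $c_2 = 6$ respectively by the standard twist formulae for multiplicative reduction, both of $2$-adic valuation $1$.

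The main obstacle is the ramified subcase at $2$, where $A^{(M)}$ acquires additive reduction of Kodaira type $I_6^*$, whose Tamagawa factor is \emph{a priori} $2$ or $4$. Ruling out the value $4$ requires tracing Tate's algorithm on an explicit minimal Weierstrass model of $A^{(M)}$ over $\BQ_2$ to verify that the two ``far'' components of the special fibre are not rational over $\BF_2$. This is precisely the local computation carried out in \cite[\S7]{Coates1}, which I would invoke directly to complete (ii).
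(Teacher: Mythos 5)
Your parts (i), (iii) and (iv) are correct and are essentially the paper's own argument: (i) amounts to the fact that the sign of the discriminant (equivalently, the non-real $2$-division field $\BQ(\sqrt{-7})$) is unchanged by twisting, and (iii)--(iv) are exactly the application of \eqref{A_Tam_q} together with the observation that $\#A(\BQ_q)[2]$ equals $2$ or $4$ according as $q$ is inert or split in $\BQ(\sqrt{-7})$. For (ii), your treatment of the prime $7$ and your two unramified subcases at $2$ ($M\equiv 1,5\pmod 8$, giving $c_2=2$ and $c_2=6$ respectively) are correct, and this is all that is actually needed: the standing convention of Section 6 is $M\equiv 1\pmod 4$ (in every application $M$ is a product of primes $\equiv 5\pmod 8$), so the twist is unramified at $2$ and (ii) indeed ``follows easily from Tate's algorithm'', which is all the paper says.

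The genuine gap is your ramified subcase $M\equiv 3\pmod 4$. First, the claim that the twist has Kodaira type $I_6^*$ is the odd-residue-characteristic rule; at $p=2$ the extension $\BQ_2(\sqrt{M})$ is wildly ramified, the conductor exponent at $2$ jumps from $1$ to $4$, the minimal discriminant has $2$-adic valuation $18$, and Tate's algorithm (or Ogg's formula) gives type $I_{10}^*$, not $I_6^*$. Second, and more seriously, the value you hope to exclude actually occurs: for $M\equiv 7\pmod 8$ the curve $A^{(M)}$ is isomorphic over $\BQ_2$ to the twist of $A$ by $-1$, and running Tate's algorithm on the model $y^2=x^3-x^2+72x+368$ terminates at a step whose quadratic reduces to $X(X+1)$ modulo $2$, so that $c_2=4$ and $ord_2(c_2(A^{(M)}))=2$. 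Consequently no local computation quoted from \cite{Coates1} can close this subcase --- the asserted equality $ord_2(c_2(A^{(M)}))=1$ simply fails there --- and your proof of (ii) is only valid under the restriction $M\equiv 1\pmod 4$, which is the situation in which the proposition is stated implicitly and used in the paper.
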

\begin{proof} Assertion (i) follows immediately from the fact that $\BQ(A[2])=\BQ(\sqrt{-7})$. Assertion (ii) follows easily from Tate's algorithm. The remaining assertions involving odd primes $q$ of bad reduction follow immediately from \eqref{A_Tam_q}, on noting that $A(\BQ_q)[2]$ is of order $2$ or $4$, accordingly as $q$ does not or does split in $\BQ(\sqrt{-7})$, respectively.
\end{proof}

To obtain the $2$-part of the Birch--Swinnerton-Dyer formula, we also have to investigate the $2$-part of $\Sha(A^{(M)})$. If we just apply Theorem \ref{MainThm-BSD}, of course we will get that the $2$-part of the Birch--Swinnerton-Dyer formula holds for a family of quadratic twists, provided both $2$ and $7$ split in $\BQ(\sqrt{M})$, whence $M$ has to have an even number of prime factors. However, a classical 2-descent of quadratic twists of $X_0(14)$ has been carried out earlier by Junhwa Choi, which yields that $\Sha(A^{(M)})[2]$ is trivial, provided that all the prime factors of $M$ are distinct primes congruent to $3,5$ modulo $8$ and inert in $\BQ(\sqrt{-7})$. We then can get the following theorem.

\begin{thm}\label{2BSD-A-14}
Let $M$ be any integer of the form $M=q_1 q_2 \cdots q_r$, $r \geq 1$, with $q_1, \ldots, q_r$ arbitrary distinct odd primes all congruent to $5$ modulo $8$, and inert in $\BQ(\sqrt{-7})$. Then the $2$-part of Birch and Swinnerton-Dyer conjecture is valid for $A^{(M)}$.
\end{thm}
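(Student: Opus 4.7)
The plan is to verify the 2-part of the Birch--Swinnerton-Dyer formula for $A^{(M)}$ directly, by computing the 2-adic valuation of each factor appearing in
\[
L^{(alg)}(A^{(M)},1) = \frac{\prod_{\ell}c_\ell(A^{(M)})\cdot|\Sha(A^{(M)})|}{|A^{(M)}(\BQ)_\mathrm{tor}|^2}
\]
and checking that the right-hand side matches the left. By Theorem \ref{Nonvanishing-14A1} we have $L(A^{(M)},1)\ne0$ and $ord_2(L^{(alg)}(A^{(M)},1))=r-1$; since the analytic rank is zero, the theorems of Gross--Zagier and Kolyvagin imply that $A^{(M)}(\BQ)$ and $\Sha(A^{(M)})$ are both finite, with algebraic rank zero.

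The Tamagawa contribution is read off directly from Proposition \ref{Tam_A-14}: one has $ord_2(c_2(A^{(M)}))=1$, $ord_2(c_7(A^{(M)}))=0$, and because each $q_i$ is inert in $\BQ(\sqrt{-7})$ by hypothesis, $ord_2(c_{q_i}(A^{(M)}))=1$ for $1\le i\le r$, giving $ord_2\bigl(\prod_\ell c_\ell(A^{(M)})\bigr)=r+1$. For the torsion, the Galois isomorphism $A[2]\cong A^{(M)}[2]$ already yields $A^{(M)}(\BQ)[2]\cong\BZ/2\BZ$; to rule out rational 4-torsion I would invoke the observation from the proof of Theorem \ref{Thm a_q 14} that the 4-division points above the rational 2-torsion of $A$ generate $\BQ(\sqrt{2})$, so after twisting a rational 4-torsion on $A^{(M)}$ would force $\BQ(\sqrt{2})\subseteq\BQ(\sqrt{M})$, impossible because $M$ is odd and square-free; and the 4-torsion above the non-rational 2-torsion cannot descend either, since $\BQ(\sqrt{-7})\not\subseteq\BQ(\sqrt{M})$ (the latter is real as $M>0$). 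Hence $ord_2(|A^{(M)}(\BQ)_\mathrm{tor}|)=1$.

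The remaining input is $\Sha(A^{(M)})[2]=0$, which is precisely what Junhwa Choi's classical 2-descent on the quadratic twists of $X_0(14)$ (cited in the paragraph preceding the theorem) delivers under the assumption that every prime factor of $M$ is $\equiv 3,5\pmod 8$ and inert in $\BQ(\sqrt{-7})$; finiteness of $\Sha(A^{(M)})$ then upgrades the triviality of the 2-torsion to $\Sha(A^{(M)})[2^\infty]=0$. Plugging everything in, the 2-adic valuation of the right-hand side of the BSD formula is $(r+1)+0-2=r-1$, matching the analytic side. The main obstacle is the control of $\Sha(A^{(M)})[2]$: a direct application of Theorem \ref{MainThm-BSD} would require both $2$ and $7$ to split in $\BQ(\sqrt{M})$, which is not guaranteed by the present congruence conditions (for instance, $2$ is inert in $\BQ(\sqrt{M})$ whenever $r$ is odd). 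It is precisely Choi's 2-descent that bypasses those splitting hypotheses and makes the stronger statement of Theorem \ref{2BSD-A-14} possible.
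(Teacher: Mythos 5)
Your proposal is correct and follows essentially the same route as the paper: it invokes Choi's classical $2$-descent for the triviality of $\Sha(A^{(M)})[2]$, Proposition \ref{Tam_A-14} for the Tamagawa factors, and Theorem \ref{Nonvanishing-14A1} for the analytic side, then assembles the $2$-adic valuations to get $r-1$ on both sides. Your only addition is the explicit argument ruling out rational $4$-torsion on $A^{(M)}$ via $\BQ(\sqrt{2})\not\subseteq\BQ(\sqrt{M})$, a point the paper's proof passes over silently when it asserts $\#(A(\BQ)[2])=2$.
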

\begin{proof}
Under the assumptions of the theorem, $\Sha(A^{(M)})[2]$ is trivial. Then combining the results of Proposition \ref{Tam_A-14}, we have that $ord_2(\prod_{q \mid M} c_q(A^{(M)})) = r$. Note also that $\#(A(\BQ)[2])= 2$. So we have
$$
ord_2 (\#(\Sha(A^{(M)}))) + ord_2(\prod_p c_p(A^{(M)})) + ord_2 (c_\infty(A^{(M)})) - 2ord_2(\#(A^{(M)}(\BQ))) = r-1.
$$
Hence, the $2$-part of Birch and Swinnerton-Dyer conjecture holds for $A^{(M)}$.
\end{proof}
Here is the beginning of an infinite set of primes $q$ satisfying the conditions in the above theorem:
$$
\mathcal{S}=\{5, 13, 61, 101, 157, 173, 181, 229, 269, 293, 349, 397, \ldots \}.
$$

\smallskip

\subsection{More numerical examples}
For the following three examples, the analogous methods of quadratic twists of $X_0(14)$ would apply, so we will not give the detailed proofs here. 

\subsubsection{Quadratic twists of `$34A1$'}

Let $A$ be the elliptic curve `$34A1$' with the minimal Weierstrass equation given by
$$
A:  y^2 + xy = x^3 - 3x + 1, 
$$
which has split multiplicative reduction at $2$ and $a_2=1$. Moreover, $A(\BQ)= \BZ/6\BZ$ and $L^{(alg)}(A,1)= 1/6$. The discriminant of $A$ is $2^{6} \cdot 17$. Also, a simple computation shows that $\BQ(A[2])=\BQ(\sqrt{17})$ and $\BQ(A'[2])=\BQ(\sqrt{2})$. Here is the beginning of an infinite set of primes $q$ which are congruent to $1$ modulo $4$ and inert in both the fields $\BQ(\sqrt{17})$ and $\BQ(\sqrt{2})$:
$$
\mathcal{S}=\{5, 29, 37, 61, 109, 173, 181, 197, 269, 277, 317, 397, \ldots \}.
$$
Let $M=q_1 q_2 \cdots q_r$, be a product of $r$ distinct primes in $\mathcal{S}$.  We then have 
$$
ord_2(L^{(alg)}(A^{(M)},1))=r-1,
$$ 
and the $2$-part of Birch and Swinnerton-Dyer conjecture is valid for all these twists.

\smallskip

\subsubsection{Quadratic twists of `$56B1$'}

Let $A$ be the elliptic curve `$56B1$' with the minimal Weierstrass equation given by
$$
A: y^2 = x^3 - x^2 - 4, 
$$
which has potentially supersingular reduction at $2$ and $a_2=0$. Moreover, $A(\BQ)= \BZ/2\BZ$ and $L^{(alg)}(A,1)= 1/6$. The discriminant of $A$ is $-2^{10} \cdot 7$. Also, a simple computation shows that $\BQ(A[2])=\BQ(\sqrt{-7})$ and $\BQ(A'[2])=\BQ(\sqrt{2})$. Here is the beginning of an infinite set of primes $q$ which are congruent to $1$ modulo $4$ and inert in both the fields $\BQ(\sqrt{-7})$ and $\BQ(\sqrt{2})$:
$$
\mathcal{S}=\{5, 13, 61, 101, 157, 173, 181, 229, 269, 293, 349, 397, \ldots \}.
$$
Let $M=q_1 q_2 \cdots q_r$, be a product of $r$ distinct primes in $\mathcal{S}$. We then have 
$$
ord_2(L^{(alg)}(A^{(M)},1))=r-1,
$$ 
and the $2$-part of Birch and Swinnerton-Dyer conjecture is valid for all these twists.

\smallskip

\subsubsection{Quadratic twists of `$99C1$'}

Let $A$ be the elliptic curve `$99C1$' with the minimal Weierstrass equation given by
$$
A: y^2 + xy = x^3 - x^2 - 15x + 8, 
$$
which has good reduction at $2$ and $a_2=1$. Moreover, $A(\BQ)= \BZ/2\BZ$ and $L^{(alg)}(A,1)= 1/2$. The discriminant of $A$ is $3^{9} \cdot 11$. Also, a simple computation shows that $\BQ(A[2])=\BQ(\sqrt{33})$ and $\BQ(A'[2])=\BQ(\sqrt{3})$. Here is the beginning of an infinite set of primes $q$ which are congruent to $1$ modulo $4$ and inert in both the fields $\BQ(\sqrt{33})$ and $\BQ(\sqrt{3})$:
$$
\mathcal{S}=\{5, 53, 89, 113, 137, 257, 269, 317, 353, 389, \ldots \}.
$$
Let $M=q_1 q_2 \cdots q_r$, be a product of $r$ distinct primes in $\mathcal{S}$. We then have 
$$
ord_2(L^{(alg)}(A^{(M)},1))=r-1,
$$ 
and the $2$-part of Birch and Swinnerton-Dyer conjecture is valid for all these twists.

\smallskip

\subsection{Examples satisfying the full Birch--Swinnerton-Dyer conjecture} 
Let $A$ be the elliptic curve `$46A1$' with the minimal Weierstrass equation given by
$$
A:  y^2+xy=x^3-x^2-10x-12, 
$$
which has non-split multiplicative reduction at $2$ and $a_2=-1$, $a_3=0$. Moreover, $A(\BQ)= \BZ/2\BZ$ and $L^{(alg)}(A,1)= 1/2$. The discriminant of $A$ is $-2^{10} \cdot 23$. The Tamagawa factors $c_2=2$, $c_{23}=1$. Also, a simple computation shows that $\BQ(A[2])=\BQ(\sqrt{-23})$ and $\BQ(A'[2])=\BQ(\sqrt{2})$. Here is the beginning of an infinite set of primes $q$ which are congruent to $1$ modulo $4$ and inert in both the fields $\BQ(\sqrt{-23})$ and $\BQ(\sqrt{2})$, and satisfy $a_q \neq 0$:
$$
\mathcal{S}=\{5, 37, 53, 61, 149, 157, 181, 229, 293, 373, \ldots \}.
$$
Let $M=q_1 q_2 \cdots q_r$ be a product of $r$ distinct primes in $\mathcal{S}$.  By Theorem \ref{MainThm}, we have $L(A^{(M)},1) \neq 0$, and
$$
ord_2(L^{(alg)}(A^{(M)},1))=r-1.
$$ 
If we carry out a classical $2$-descent on $A^{(M)}$, one shows easily that the $2$-primary component of $\Sha(A^{(M)})$ is zero and $ord_2(c_{q_i})=1$ for $1 \leq i \leq r$, and therefore the $2$-part of the Birch and Swinnerton-Dyer conjecture holds for $E^{(M)}$. Alternatively, we can just apply Theorem \ref{MainThm-BSD}, and take the number of prime factors of $M$, say $r(M)$, to be even, and take $M \equiv 1 \mod 8$. The assumption that both $2$ and $23$ split in $\BQ(\sqrt{M})$ will hold, whence we can also verify the $2$-part of the Birch and Swinnerton-Dyer conjecture. Then combining with the result in \cite[Theorem 9.3]{Wan}, the full Birch and Swinnerton-Dyer conjecture is valid for $A^{(M)}$. Hence the full Birch and Swinnerton-Dyer conjecture is verified for infinitely many elliptic curves.

\bigskip

\bigskip
\bigskip

\noindent Li Cai \\
Yau Mathematical Sciences Center \\
Tsinghua University \\
Beijing 100084 \\
China \\
{\it lcai@mail.tsinghua.edu.cn}

\medskip

\noindent Chao Li \\
Department of Mathematics \\
Columbia University \\
2990 Broadway, New York, NY 10027 \\
U.S.A. \\
{\it chaoli@math.columbia.edu}

\medskip

\noindent Shuai Zhai  \\
Department of Pure Mathematics and Mathematical Statistics \\
University of Cambridge \\
Cambridge CB3 0WB \\
United Kingdom \\
\smallskip
{\it S.Zhai@dpmms.cam.ac.uk}

\bigskip

\end{document}